\theoremstyle{plain}
    \newtheorem{thm}{Theorem}
    \newtheorem{lem}[thm]{Lemma}
    \newtheorem{prop}[thm]{Proposition}
    \newtheorem{problem}[thm]{Problem}
    \newtheorem{theorem}[thm]{Theorem}
\theoremstyle{definition}
    \newtheorem{defn}[thm]{Definition}
    \newtheorem{definition}[thm]{Definition}
\theoremstyle{remark}
\DeclareMathOperator{\tp}{tp}
\DeclareMathOperator{\typ}{tp}
\DeclareMathOperator{\id}{id}
\DeclareMathOperator{\Aut}{Aut}
\DeclareMathOperator{\End}{End}
\DeclareMathOperator{\Pol}{Pol}
\DeclareMathOperator{\Expr}{Expr}
\DeclareMathOperator{\Csp}{CSP}
\newcommand{\exprpp}{\Expr_{\operatorname{pp}}}
\newcommand{\exprep}{\Expr_{\operatorname{ep}}}
\newcommand{\exprex}{\Expr_{\operatorname{ex}}}
\newcommand{\exprfo}{\Expr_{\operatorname{fo}}}
\newcommand{\To}{\rightarrow}
\newcommand{\nin}{\notin}
\newcommand{\F}{\mathcal F}
\newcommand{\C}{\mathcal C}
\newcommand{\E}{\mathcal E}
\newcommand{\D}{\mathcal D}
\newcommand{\R}{\mathcal R}
\newcommand{\M}{\mathcal M}
\newcommand{\N}{\mathcal N}
\newcommand{\inv}{^{-1}}
\newcommand{\mult}{\times}
\newcommand{\ignore}[1]{}
\date{Version 5 -- March 4, 2012}
\author{Manuel Bodirsky}
    \address{Laboratoire d'Informatique  (LIX), CNRS UMR 7161\\
    \'{E}cole Polytechnique \\91128 Palaiseau\\
    France}
    \email{bodirsky@lix.polytechnique.fr}
    \urladdr{http://www.lix.polytechnique.fr/~bodirsky/}
\author{Michael Pinsker}
    \address{\'{E}quipe de Logique Math\'{e}matique\\ Universit\'{e} Diderot -- Paris 7\\
	UFR de Math\'{e}matiques\\
	75205 Paris Cedex 13, France}
    \email{marula@gmx.at}
    \urladdr{http://dmg.tuwien.ac.at/pinsker/}
\author{Todor Tsankov}
    \address{\'{E}quipe de Logique Math\'{e}matique\\ Universit\'{e} Diderot -- Paris 7\\
	UFR de Math\'{e}matiques\\
	75205 Paris Cedex 13, France}
    \email{todor@math.jussieu.fr}
    \urladdr{http://people.math.jussieu.fr/~todor/}
    \thanks{The research leading to these results has received funding from the European Research Council under the European Community's Seventh Framework Programme (FP7/2007-2013 Grant Agreement no. 257039).
The second author is grateful for support through an APART-fellowship of the Austrian Academy of Sciences.  }
\title[Decidability of definability]{Decidability of definability}
\begin{document}

\begin{abstract}
    For a fixed countably infinite structure $\Gamma$ with finite relational signature $\tau$,
    we study the following computational problem: input are quantifier-free $\tau$-formulas $\phi_0,\phi_1,\dots,\phi_n$
    that define relations $R_0,R_1,\dots,R_n$ over $\Gamma$. The question is whether the relation $R_0$ is primitive positive definable from $R_1,\ldots,R_n$, i.e., definable by a first-order formula
    that uses only relation symbols for $R_1, \dots, R_n$, equality, conjunctions, and existential quantification (disjunction, negation, and universal quantification are forbidden).

    We show decidability of this problem for
    all structures $\Gamma$ that have a first-order definition in an ordered homogeneous structure $\Delta$ with a finite relational signature whose age is a Ramsey class and determined by finitely many forbidden substructures. Examples of structures $\Gamma$ with this property are the order of the rationals, the random graph, the homogeneous universal poset, the random tournament, all homogeneous universal $C$-relations, and many more. We also obtain decidability of the problem when we replace primitive positive definability by existential positive, or existential definability.
    Our proof makes use of universal algebraic and model theoretic concepts, Ramsey theory, and a recent characterization of Ramsey classes in topological dynamics.
\end{abstract}

\maketitle

%{\small
%\tableofcontents
%}

\section{Motivation and the Main Result}\label{sect:motivationresult}

When studying a countably infinite relational structure $\Theta$, we often wish to know what $\Theta$ can express by its relations; for example, which other structures it interprets or defines. Concentrating on the latter, it would be pleasant to have an oracle which, given two structures $\Theta_1, \Theta_2$ on the same domain, tells us whether they define one another. If all structures we are interested in have finite signature, this is the same as having an oracle which, given a structure $\Theta$ and a relation $R$ on the same domain, tells us whether $R$ can be defined from $\Theta$.

In this context, different notions of definability can be considered. The first notion that comes to mind is probably \emph{first-order definability}: an $n$-ary relation $R$ is first-order definable in $\Theta$ iff there is a first-order formula $\phi(x_1,\ldots,x_n)$ over the language for $\Theta$ such that for all $n$-tuples $a$ of elements in $\Theta$ we have $a\in R$ iff $\phi(a)$ holds. In some applications, however, other notions of definability, in particular syntactic restrictions of first-order definability, are useful. We will be concerned here with \emph{primitive positive definability}: a first-order formula is called \emph{primitive positive} iff it is of the form $\exists y_1\ldots \exists y_m.\; \psi$, where $\psi$ is a conjunction of atomic formulas; and an $n$-ary relation $R$ is primitive positive definable over $\Theta$ iff it is first-order definable in
 $\Theta$ by means of a primitive positive formula $\phi(x_1,\ldots,x_n)$. 
 
Primitive positive definability is of importance in the study of the \emph{constraint satisfaction problem for $\Theta$}, denoted by $\Csp(\Theta)$, in theoretical computer science. In such a problem, the input consists of a primitive positive sentence $\psi$ (that is, a primitive positive formula without free variables), and the question is whether $\psi$ is true in $\Theta$. 
Primitive positive definability of relations in $\Theta$ 
is important in the study of $\Csp(\Theta)$ because the CSP for an expansion of $\Theta$ by relations that are primitive positive definable in $\Theta$ can be reduced (in linear time) to $\Csp(\Theta)$.  

We will present here conditions under which the oracle which is to tell us whether a relation $R$ has a primitive positive definition from a finite language structure $\Theta$ can be a computer, i.e., under which the problem is decidable. In order to make the problem suitable for an algorithm, we need a finite representation of the input of the problem, that is, the relation $R$ and the structure $\Theta$. Our approach is to fix a base structure $\Gamma$ with finite relational language, and to assume that both $R$ and $\Theta$ have a quantifier-free definition in $\Gamma$. We then represent $R$ and $\Theta$ as quantifier-free formulas over $\Gamma$. Therefore, the input of our problem are quantifier-free formulas $\phi_0,\ldots,\phi_n$ in the language of $\Gamma$, of which $\phi_0$ defines the relation $R$, and $\phi_1,\ldots,\phi_n$ define the relations $R_1,\ldots,R_n$ of $\Theta$; the question is whether there is a primitive positive definition of $\phi_0$ that uses only relation symbols for $R_1,\dots,R_n$. We denote this computational problem by $\exprpp(\Gamma)$.

An algorithm for primitive positive definability has theoretical and practical consequences in the study of the computational complexity of CPSs. On the practical side, it turns out that hardness of $\Csp(\Theta)$ can usually be shown by presenting primitive positive definitions of relations for which it is known that the CSP is hard. Therefore,
a procedure that decides primitive positive definability of a given relation is a useful tool to determine the computational complexity of CSPs. 

%We will prove that there are conditions for $\Gamma$ which imply that $\exprpp(\Gamma)$ is decidable. 
%The existence of conditions for $\Gamma$ which imply that $\exprpp(\Gamma)$ is decidable is not at all obvious: 
For the simplest of countable structures, namely the structure $(X;=)$ having no relations but equality, the decidability of $\exprpp(\Gamma)$ has been stated as an open problem in~\cite{BodChenPinsker}. 
We will show here decidability of $\exprpp(\Gamma)$ for a large class of structures $\Gamma$ which we will now define.

Let $\tau$ be a finite relational signature.
%All structures in this paper will have a finite relational signature, typically denoted by $\tau$. 
%so we make this assumption implicitly in all statements. Michael protested so I took this back. 
The \emph{age} of a $\tau$-structure $\Delta$ is the class
of all finite $\tau$-structures that embed into $\Delta$.
We say that a class $\mathcal C$ of finite $\tau$-structures, and similarly a structure with age $\mathcal C$, is
\begin{itemize}
\item \emph{finitely bounded}
(in the terminology of~\cite{MacphersonSurvey})
iff there exists a finite set of finite $\tau$-structures $\mathcal F$ such
that for all finite $\tau$-structures $A$ we have $A \in \mathcal C$ iff no structure from $\mathcal F$ embeds into $A$;
\item \emph{Ramsey} iff for all $k \geq 1$ and for all $H,P \in \mathcal C$ there exists $S\in\mathcal C$ such that $S \rightarrow (H)^P_k$, i.e., for all colorings of the copies of $P$ in $S$ with $k$ colors there exists a copy of $H$ in $S$ on which the coloring is constant (for background in Ramsey theory see~\cite{GrahamRothschildSpencer});
\item \emph{ordered} iff the signature $\tau$ contains a binary relation
that denotes a total order in every $A \in \mathcal C$.
\end{itemize}

A structure is called \emph{homogeneous} iff all isomorphisms between finite induced substructures\footnote{In this article, substructures are always meant to be \emph{induced}; see~\cite{Hodges}.} extend to automorphisms of the whole structure. 
A structure $\Gamma$ is called a \emph{reduct} of a structure $\Delta$ with the same
domain iff all relations in $\Gamma$ are first-order definable in $\Delta$. 
We will prove the following.

\begin{thm}\label{thm:main:pp}
	Let $\Delta$ be a structure which is ordered, homogeneous, Ramsey, finitely bounded, and has a finite relational signature.
	Then for any reduct $\Gamma$ of $\Delta$ with finite relational signature 
    the problem $\exprpp(\Gamma)$ is decidable.
\end{thm}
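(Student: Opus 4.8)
The plan is to use the Galois connection between primitive positive definability and polymorphism clones. The key fact, due to the infinite-domain analogue of the classical theorem of Geiger and of Bodnarchuk–Kalužnin–Kotov–Romov (as extended by Bodirsky–Nešetřil), is that a relation $R_0$ is primitive positive definable from a structure $\Theta$ (with an $\omega$-categorical or otherwise sufficiently saturated base) precisely when $R_0$ is preserved by every polymorphism of $\Theta$, i.e.\ $R_0 \in \Inv(\Pol(\Theta))$, where we take topological closure as needed. So the problem $\exprpp(\Gamma)$ reduces to deciding, given quantifier-free definitions of $R_0, R_1, \dots, R_n$, whether $\Pol(R_1, \dots, R_n)$ preserves $R_0$. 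My first step would therefore be to establish this Galois connection rigorously in the present setting, which requires $\Gamma$ to be $\omega$-categorical; this follows because $\Delta$ is homogeneous with finite relational signature, hence $\omega$-categorical, and $\Gamma$ as a first-order reduct inherits $\omega$-categoricity.

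The central difficulty is that $\Pol(\Theta)$ is an infinite object, so one cannot simply enumerate polymorphisms. The plan is to reduce the existence of a ``bad'' polymorphism (one violating $R_0$) to a finitary, and hence semidecidable, condition, and simultaneously to make primitive positive definability semidecidable, so that the two semidecision procedures together yield decidability. Semidecidability of the positive instance (``$R_0$ is pp-definable'') is the easier direction: one enumerates all primitive positive formulas over $R_1, \dots, R_n$, and for each candidate formula checks whether it defines the same relation as $\phi_0$ over $\Gamma$; this equivalence check is decidable because $\Gamma$ is $\omega$-categorical with a decidable first-order theory (which follows from finite boundedness of $\Delta$, giving a recursive axiomatization of $\Delta$ and hence of $\Gamma$). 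The hard part is semidecidability of the negative instance, namely exhibiting a polymorphism of $R_1, \dots, R_n$ that does not preserve $R_0$.

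To attack the negative direction I would use the Ramsey-theoretic and topological-dynamical machinery flagged in the abstract. The strategy is to show that if a violating polymorphism exists at all, then one exists with strong \emph{canonical} behaviour: by the Ramsey property of the age of $\Delta$ together with the characterization of Ramsey classes via extreme amenability of the automorphism group (the Kechris–Pestov–Todorcevic correspondence), any polymorphism can be ``regularized'' on large, order-indiscernible sets so that its behaviour on tuples depends only on the quantifier-free type of the tuple in $\Delta$. Such canonical functions are described by finitely much data—a map on the finitely many quantifier-free types of bounded arity—because $\Delta$ is finitely bounded and has finite signature, so there are only finitely many types of each arity. One then verifies that testing whether a given finite type-behaviour extends to an actual polymorphism violating $R_0$ is a decidable condition, and enumerating all such finite behaviours gives the desired semidecision procedure. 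The main obstacle, which I expect to absorb most of the technical work, is establishing this canonization rigorously: one must show that the existence of \emph{any} violating polymorphism implies the existence of a \emph{canonical} one, which requires a careful Ramsey-type argument combining local extraction of patterns with a compactness (König's lemma / interpolation) argument to assemble a global polymorphism from consistent local canonical data.
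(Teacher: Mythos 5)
Your overall architecture is the paper's: the Bodirsky--Ne\v{s}et\v{r}il preservation theorem reduces $\exprpp(\Gamma)$ to the (non-)existence of a polymorphism of $\Theta$ violating $R_0$, and Ramsey theory is used to replace an arbitrary violating polymorphism by a \emph{canonical} one describable by finitely much type data. Your packaging via two dovetailed semidecision procedures differs from the paper, which gives a direct decision procedure: it first bounds the arity of a potential witness by the number of orbits of tuples in $R_0$ (so the set of candidate behaviors is finite) and then checks each behavior for compatibility, violation, and preservation. Your dovetailing spares you the arity-reduction lemma but costs you a positive-side procedure (enumerating pp-formulas and deciding equivalence over $\Gamma$, which does work since finite boundedness plus homogeneity make the type spaces, and hence the theory, computable). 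That trade is legitimate; the substantive issues are elsewhere.

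The first genuine gap is in your canonization step. As stated --- ``any polymorphism can be regularized \ldots\ so that its behaviour on tuples depends only on the quantifier-free type of the tuple in $\Delta$'' --- the claim is true but useless: every function generates over $\Delta$ a canonical function, namely the identity, and a violating polymorphism need not generate \emph{any} function that is both canonical with respect to $\Delta$ and still violates $R_0$. The necessary fix is to name the tuples $c_1,\dots,c_m$ witnessing $f(c_1,\dots,c_m)\notin R_0$ as constants and to produce a function canonical from $(\Delta,c_1)\times\cdots\times(\Delta,c_m)$ to $\Delta$ that agrees with $f$ on those constants. This requires that the expansion of $\Delta$ by finitely many constants is still Ramsey, and that is precisely where the Kechris--Pestov--Todorcevic correspondence is needed: extreme amenability of $\Aut(\Delta)$ passes to the open subgroup $\Aut(\Delta,c_1,\dots,c_m)$. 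You invoke KPT only as a vague regularization device, so this essential step is missing from your plan, and without it the canonical witness you intend to search for may not exist. (One also needs a product version of the Ramsey property to handle arity $m>1$.)

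The second gap is the soundness of your behavior test. You assert that ``testing whether a given finite type-behaviour extends to an actual polymorphism violating $R_0$ is a decidable condition'' without indicating how. If the test accepts a behavior that is not realized by any polymorphism, your algorithm wrongly reports non-definability, so this direction cannot be waved through. The paper's argument here is where finite boundedness does work beyond making the theory recursive: from an accepted behavior $\sigma$ one builds a structure on $D^m$, quotients by the equivalence relation that $\sigma$ induces via the equality type, observes that no forbidden bound embeds into any finite substructure because $\sigma$ was checked on types of arity at least the maximal bound size, and then embeds the quotient into $\Delta$ by $\omega$-categoricity and compactness. Both of these steps --- canonization relative to added constants, and realization of accepted behaviors --- need to be supplied before your proposal becomes a proof.
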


We remark that for \emph{finite} structures $\Gamma$ the problem $\exprpp(\Gamma)$ is in co-NEXPTIME (and in particular decidable). For the variant where the finite structure
$\Gamma$ is part of the input, the problem has recently shown to be also co-NEXPTIME-hard~\cite{Willard-cp10}.

Note that since $\Delta$ is homogeneous, it has \emph{quantifier
elimination}, i.e., every relation which is first-order definable in
$\Delta$ can be defined by a quantifier-free formula. Hence, choosing
$\Gamma=\Delta$, we see that our requirement for the relations in
$\exprpp(\Gamma)$ to be given by quantifier-free formulas does not
restrict the range of relations under consideration.

Examples of structures $\Delta$ that satisfy the assumptions
of Theorem~\ref{thm:main:pp} are $({\mathbb Q};<)$,
the Fra\"{i}ss\'{e} limit of ordered finite graphs (or tournaments~\cite{RamseyClasses}), the Fra\"{i}ss\'{e} limit of
finite partial orders with a linear extension~\cite{RamseyClasses}, and the homogeneous universal `naturally ordered' $C$-relations. (For definition and basic properties of $C$-relations, see~\cite{MR1388893}, in particular Theorem~14.7. The fact that the homogeneous universal naturally ordered $C$-relations have the Ramsey property follows from Theorem 4.3 in~\cite{Mil79}; an
 explicit and elementary verification of the Ramsey property for the binary
 branching case can be found in~\cite{BodirskyPiguet}.) CSPs of reducts of such structures are abundant
in particular for qualitative reasoning calculi in Artificial Intelligence. For instance, our result shows that it is decidable whether a given relation from Allen's Interval Algebra~\cite{Allen,RandomReducts} is primitive positive definable in a given fragment of Allen's Interval Algebra.

As mentioned above, for $\Gamma=(X; =)$, the decidability of $\exprpp(\Gamma)$ has been posed as an open problem in~\cite{BodChenPinsker}. Our results solve this problem, since $(X;=)$ is definable in $\Delta:=(\mathbb Q;<)$, which is  ordered, homogeneous, Ramsey, and finitely bounded: the Ramsey property for this structure follows from the classical Ramsey theorem, and the other properties are easily verified.

Using similar methods, decidability of the analogous problem for other syntactic restrictions of first-order logic can be shown in the same context. A formula is called \emph{existential} iff it is of the form $\exists y_1\ldots \exists y_m.\; \psi$, where $\psi$ is quantifier-free. It is called \emph{existential positive} iff it is existential and does not contain any negations. For a $\tau$-structure $\Gamma$, we denote by $\exprex(\Gamma)$ ($\exprep(\Gamma)$) the problem of deciding whether a given quantifier-free $\tau$-formula $\phi_0$ has an existential (existential positive) definition over the structure with the relations defined by given quantifier-free $\tau$-formulas $\phi_1,\ldots,\phi_n$ in $\Gamma$.

\begin{thm}\label{thm:main:exep}
	Let $\Delta$ be a structure which is ordered, homogeneous, Ramsey, finitely bounded, and has a finite relational signature.
	Then for any reduct $\Gamma$ of $\Delta$ with finite relational signature 
    the problems $\exprex(\Gamma)$ and $\exprep(\Gamma)$ are decidable.
\end{thm}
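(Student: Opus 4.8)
The plan is to reduce the decidability of $\exprex(\Gamma)$ and $\exprep(\Gamma)$ to the same Galois-theoretic machinery that underlies the proof of Theorem~\ref{thm:main:pp}, exploiting the well-known correspondences between syntactic restrictions of first-order logic and closed classes of operations on the domain of $\Gamma$. Just as primitive positive definability over $\Gamma$ is governed by the polymorphism clone $\Pol(\Gamma)$ (so that $R$ is pp-definable from $R_1,\dots,R_n$ iff every polymorphism of $(R_1,\dots,R_n)$ preserves $R$), existential positive definability is governed by the monoid of \emph{endomorphisms} $\End(\Gamma)$, and existential definability by the monoid of \emph{self-embeddings} $\Emb(\Gamma)$. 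First I would make these three Galois connections precise in the infinite setting, recalling that for $\omega$-categorical structures the relevant operator on the algebraic side is topological closure together with closure under the appropriate compositions, so that preservation by the corresponding closed transformation monoid exactly characterizes definability in each of the three fragments.

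The key steps then run in parallel with the proof of the main theorem. The heart of that proof is that the relevant closed objects over a reduct $\Gamma$ of an ordered homogeneous Ramsey finitely bounded $\Delta$ admit a finite description: using the Ramsey property and the topological-dynamical characterization of Ramsey classes, one shows that every polymorphism (respectively endomorphism, self-embedding) behaves, after composition with automorphisms, \emph{canonically} with respect to $\Delta$, meaning it induces a well-defined action on types over $\Delta$. Because $\Delta$ is finitely bounded, there are only finitely many types of each bounded arity, so the canonical behaviours form a finite, effectively enumerable set; and the finitely-many-forbidden-substructures condition makes membership in the age, and hence the existence of the relevant maps, decidable. I would therefore establish the analogue of the canonization statement for endomorphisms and for self-embeddings — these are unary and hence a simpler special case of the polymorphism analysis already carried out — and then check that the algorithm which searches through the finitely many canonical behaviours to decide whether $R_0$ is preserved works verbatim once ``canonical polymorphism'' is replaced by ``canonical endomorphism'' or ``canonical self-embedding.''

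Concretely, the decision procedure is: given $\phi_0,\dots,\phi_n$, enumerate the finitely many candidate canonical unary behaviours relative to $\Delta$; for each, test whether it can be realized as an endomorphism (respectively embedding) of the structure $(\Gamma; R_1,\dots,R_n)$ — a test that reduces to a solvability question over the age of $\Delta$, decidable by finite boundedness — and whether it violates $R_0$; by the Galois connection and canonization, $R_0$ is existential-positive (respectively existential) definable from $R_1,\dots,R_n$ if and only if no such behaviour both extends to the given structure and falsifies $\phi_0$. The correctness of this equivalence rests on the canonization lemma supplied by the Ramsey machinery of the main proof.

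The main obstacle I expect is not the algorithmic bookkeeping but verifying the canonization step for the monoid cases with the \emph{right} notion of compatibility. For endomorphisms one must be careful that existential positive formulas are preserved precisely by homomorphisms that need not be injective nor respect negated relations, so the algebraic object is genuinely the endomorphism monoid and not the embedding monoid; dually, existential definability corresponds to self-embeddings, which do respect all quantifier-free types and so interact with the order and the forbidden substructures differently. Getting these two correspondences stated and proved correctly in the $\omega$-categorical topological setting — in particular confirming that the closure operators match the closed monoids generated under composition and local (pointwise) limits — is where the care is needed; once that is in place, the reduction to the already-established Ramsey-theoretic finiteness is essentially mechanical.
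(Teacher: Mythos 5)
Your proposal takes essentially the same route as the paper: the paper's proof of this theorem is explicitly a subset (the unary case) of its proof of Theorem~\ref{thm:main:pp}, using the {\L}os--Tarski and homomorphism preservation theorems to reduce existential and existential positive definability to preservation by self-embeddings and endomorphisms respectively, the Ramsey canonization lemma for unary functions with added constants, and the same finite enumeration of canonical behaviours verified against the forbidden substructures. The only small slip is attributing the finiteness of the type spaces $S^\Delta_n$ to finite boundedness rather than to $\omega$-categoricity (homogeneity plus finite relational signature); finite boundedness is needed only to make the realizability check effective, a role you in fact also identify correctly.
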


The assumptions on $\Delta$ in our theorems fall into two classes: the conditions of being ordered, homogeneous, Ramsey, and having finite relational signature imposed on $\Delta$ generally allow for a relatively good understanding (in a non-algorithmic sense) of the reducts of $\Delta$. The recent survey paper~\cite{BP-reductsRamsey} summarizes what we know about reducts of such structures -- their exciting feature is that many branches of mathematics, including model theory, combinatorics, universal algebra, and even topological dynamics are employed in their study, and indirectly also in our algorithm. The additional condition of being finitely bounded is needed to represent $\Delta$ algorithmically.

This paper is organized as follows. In Section~\ref{sect:henson} we show that the assumption of $\Delta$ being finitely bounded is necessary for our decidability result. We then turn to the proof of Theorems~\ref{thm:main:pp} and~\ref{thm:main:exep}: in Section~\ref{sect:preservation} we cite preservation theorems of the form ``$R$ is definable from $\Theta$  (in some syntactically restricted form of first-order logic) if and only if certain functions on the domain of $\Theta$  (which functions depends on the syntactic restriction) preserve $R$''. Section~\ref{sect:canonical} is devoted to the use of Ramsey theory in order to standardize functions that do not preserve $R$ -- if such functions exist. Our decision procedure, presented in Section~\ref{sect:algorithm}, then uses this standardization of functions and the preservation theorems to check whether or not $R$ is definable from $\Theta$. The paper ends with two sections containing further discussion and open problems.

\section{Undecidability of Definability}
\label{sect:henson}
This section demonstrates that the assumption in Theorem~\ref{thm:main:exep} of $\Delta$ being finitely bounded is necessary.
We use a class of homogeneous digraphs introduced by Henson~\cite{Henson}.
%Henson~\cite{Henson} used Fra\"{i}ss\'{e}-limits (a good introduction to Fra\"{i}ss\'{e} amalgamation is e.g.~\cite{Hodges}) to construct $2^\omega$ many
%homogeneous directed graphs. 
A \emph{tournament} is a directed graph without self-loops such that for
all pairs $x,y$ of distinct vertices exactly one of the pairs $(x,y)$, $(y,x)$
is an arc in the graph.  
For a set of finite directed graphs $\mathcal N$, we write $\text{Forb}(\mathcal N)$ for the class of all finite directed graphs that 
do not embed one of the structures from $\mathcal N$. 
For all sets $\mathcal N$ of finite tournaments there exists a countably infinite 
homogeneous directed graph $\Gamma$ with age $\text{Forb}(\mathcal N)$ (this can be shown by amalgamation, see~\cite{Hodges}).
Moreover, those properties characterize $\Gamma$ up to isomorphism.  
%because if $\Omega_1$ and $\Omega_2$ are directed graphs 
%in $\text{Forb}(\mathcal N)$ such that $\Omega=\Omega_1\cap
%\Omega_2$ is an induced subgraph of both $\Omega_1$ and $\Omega_2$, then the \emph{free amalgam}\footnote{The \emph{free amalgam} of the two graphs $\Omega_1$ and $\Omega_2$ is the graph whose domain is the union of the domains of $\Omega_1$ and $\Omega_2$, and whose edge set is the union of the edge sets of $\Omega_1$ and $\Omega_2$.} is also contained in $\text{Forb}(\mathcal N)$. 
Henson specified an infinite set 
$\mathcal T$ of finite tournaments $\Lambda_1,\Lambda_2,\dots$ with the property that
$\Lambda_i$ does not embed into $\Lambda_j$ if $i \neq j$; the exact definition of this set
is not important in what follows. But note that for two distinct subsets ${\mathcal N}_1$ and ${\mathcal N}_2$ of $\mathcal T$ the two sets $\text{Forb}({\mathcal N}_1)$
and $\text{Forb}({\mathcal N}_2)$ are distinct as well, and so are the respective homogeneous digraphs with age $\text{Forb}({\mathcal N}_1)$ and $\text{Forb}({\mathcal N}_2)$. 
Since there
are $2^\omega$ many subsets 
of the infinite set $\mathcal T$, 
there are also that many distinct homogeneous directed graphs; 
they are often referred to as \emph{Henson digraphs}. 

\begin{prop}
There exists a ordered directed graph $\Delta$ which is homogeneous and Ramsey such that $\exprpp(\Delta)$ and $\exprep(\Delta)$ are undecidable.
\end{prop}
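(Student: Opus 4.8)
The plan is to realise the necessity of finite boundedness by encoding a non-recursive set into the \emph{age} of $\Delta$, so that membership in the age is already undecidable, and then to transfer this undecidability to definability. Fix a non-recursive subset $\mathcal N\subseteq\mathcal T$ of Henson's tournaments (for instance the set of codes of halting Turing machines, transported along $i\mapsto\Lambda_i$), and let $\Delta$ be the Fra\"{\i}ss\'e limit of the class $\mathcal C$ of all finite ordered digraphs whose underlying digraph lies in $\operatorname{Forb}(\mathcal N)$, where every linear order on the vertices is allowed. Thus $\Delta$ has the finite relational signature $\{E,<\}$ and is ordered by $<$.

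First I would check that $\Delta$ satisfies all hypotheses of Theorem~\ref{thm:main:exep} \emph{except} finite boundedness. Homogeneity, and the fact that $\Delta$ is well defined, follow from verifying that $\mathcal C$ is an amalgamation class: in the digraph coordinate one uses free amalgamation, and the order is amalgamated generically. Freeness is exactly what keeps us inside $\operatorname{Forb}(\mathcal N)$, since a free amalgam inserts no arcs between the two sides and hence contains no new \emph{tournament}, in particular no new copy of any $\Lambda_j$. The Ramsey property is then obtained from the theory of ordered free amalgamation classes: the ordered expansion of a free amalgamation class equipped with the generic linear order is a Ramsey class (see~\cite{RamseyClasses}). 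Finally $\Delta$ is \emph{not} finitely bounded: the $\Lambda_j$ with $j\in\mathcal N$ are pairwise non-embeddable minimal obstructions, so no finite set $\mathcal F$ of forbidden substructures can cut out the age of $\Delta$.

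The source of undecidability is that, by the choice of $\mathcal N$, the age of $\Delta$ is a non-recursive class. Concretely, an ordered expansion of the tournament $\Lambda_i$ embeds into $\Delta$ if and only if $i\nin\mathcal N$, so the problem of deciding, for a given finite ordered structure, whether it embeds into $\Delta$ — equivalently, whether a given primitive positive sentence holds in $\Delta$ — is undecidable. The remaining, and principal, task is a reduction of this embeddability problem to $\exprpp(\Delta)$ and to $\exprep(\Delta)$.

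For the reduction I would use the preservation theorems recalled in Section~\ref{sect:preservation}: a relation is primitive positive definable from a set of relations $\Gamma$ iff it is preserved by $\Pol(\Gamma)$, and existential positive definable iff it is preserved by the endomorphisms of $(\Delta;\Gamma)$. The idea is, given $\Lambda_i$, to build a definability instance whose generators $\Gamma$ are gadget relations read off from the pattern of $\Lambda_i$ and whose target is a \emph{negative} relation — a disequality or non-arc relation — so that definability of the target is equivalent to the nonexistence of a polymorphism (respectively endomorphism) collapsing that relation, and this in turn is equivalent to $\Lambda_i$ \emph{not} embedding into $\Delta$. The easy direction is that when $\Lambda_i$ does not embed, the corresponding pattern relation is empty and the target becomes definable for trivial reasons. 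The hard direction, and the main obstacle, is the converse: when $\Lambda_i$ embeds one must actually produce a polymorphism (respectively endomorphism) of $\Gamma$ that violates the target, witnessing non-definability. I expect to construct such a map by a back-and-forth argument driven by the genericity and homogeneity of $\Delta$, standardising its behaviour by the Ramsey-theoretic technique of Section~\ref{sect:canonical}; the delicate point is to engineer the gadget so that overlapping copies of $\Lambda_i$ never force an unintended forbidden tournament, so that the \emph{only} obstruction detected by the instance is the embeddability of $\Lambda_i$ itself. Carrying out this last step, rather than the construction of $\Delta$ or the verification of the Ramsey property, is where the real work lies; once it is done, the undecidability of embeddability transfers directly to $\exprpp(\Delta)$ and $\exprep(\Delta)$.
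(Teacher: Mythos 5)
Your high-level strategy differs genuinely from the paper's: you aim for an explicit reduction from a non-recursive subset $\mathcal N\subseteq\mathcal T$ to the definability problem, whereas the paper avoids any reduction by a counting argument --- it exhibits, for any two non-isomorphic Henson digraphs, a single instance on which their $\exprpp$ problems give different answers, and then observes that $2^\omega$ pairwise distinct problems cannot all be decidable since there are only countably many algorithms. Your route is viable in principle, but as written it has a genuine gap exactly at the step you flag as ``the real work'': the equivalence between definability of your target and (non-)embeddability of $\Lambda_i$ is never established. Moreover, the gadget design you sketch makes even your ``easy direction'' false: if the target is a disequality or non-arc relation and the generating relation is empty, the target is \emph{not} definable for trivial reasons. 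From an empty relation the only primitive positive (or existential positive) definable relations are the empty ones and those given by conjunctions of equalities among the free variables, and $x\neq y$ is not among these. So emptiness of the pattern relation does not hand you definability of a negative target.

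The fix, which is exactly the paper's construction, is to orient the gadget the other way. Let $\psi$ be the quantifier-free formula describing $\Lambda_i$ (with explicit non-arcs and disequalities) on variables $x_1,\dots,x_s$, let the single generator be defined by $\phi_1:=\psi\wedge E(x_{s+1},x_{s+2})$, and take the target to be $\phi_0:=E(x,y)$. If $\Lambda_i$ embeds into $\Delta$, then $\exists x_1,\dots,x_s.\,R(x_1,\dots,x_s,x,y)$ is an explicit primitive positive definition of $E$; if it does not embed, the generator is the empty relation, whose automorphism group is the full symmetric group on the domain, so $E$ is not even first-order definable from it. Neither direction requires constructing a polymorphism, a back-and-forth argument, or any Ramsey standardisation; the machinery of Section~\ref{sect:canonical} is needed for the decidability results, not for this undecidability result. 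With this gadget your reduction closes, and your argument then differs from the paper's only in replacing the cardinality argument by an explicit non-recursive $\mathcal N$ --- which buys you a concretely described undecidable instance family at the cost of having to verify computability of the map $i\mapsto\phi_1$, while the paper's counting argument is shorter but purely existential.
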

\begin{proof}
%When $\C$ is the age of a Henson digraph $\Delta$, write $\C'$ for the class 
%of all structures obtained from
%the digraphs in $\C$ by adding an arbitrary linear order on the vertices. As stated above,
%$\C'$ is a Ramsey class, and is an amalgamation class; let  $\Delta'$ denote its Fra\"{i}ss\'{e}-limit. 
For any Henson digraph $\Gamma$, the class $\C$ of all expansions of the structures in the age of $\Gamma$ by a linear order is a Ramsey class; this can been shown by the partite method~\cite{NesetrilRoedlPartite}. Moreover, there exists a homogeneous ordered digraph $\Delta$ with age $\C$ (again by amalgamation, see~\cite{Hodges}),
and $\Gamma$ is a reduct of $\Delta$. 

We show that non-isomorphic Henson digraphs $\Gamma_1$ and $\Gamma_2$ 
have distinct $\exprpp$ problems.
In the following, let $E$ denote a binary relation symbol that we
use to denote the edge relation in graphs. 
In fact, we show the existence of a first-order formula $\phi_1$ over digraphs such that the input $\phi_0 := E(x,y)$ and $\phi_1$ is a yes-instance of $\exprpp(\Gamma_1)$ and a no-instance of $\exprpp(\Gamma_2)$, or vice-versa. Since there are uncountably many Henson digraphs, but only countably many
algorithms, this clearly shows the existence of  Henson digraphs $\Gamma$ such that
$\exprpp(\Gamma)$ is undecidable. This finishes the proof since $\Gamma$ is a reduct of an ordered homogeneous Ramsey structure $\Delta$, as we have seen above, and
$\exprpp(\Delta)$ must be undecidable as well. The same argument shows undecidability of $\exprep(\Delta)$. 

By the definition of $\Gamma_1$ and $\Gamma_2$, there exists a finite digraph $\Omega$ which
embeds into $\Gamma_1$ but not into $\Gamma_2$, or that embeds into $\Gamma_2$ but not into 
$\Gamma_1$.
Assume without loss of generality the former.
Let $s$ be the number of elements of $\Omega$, and denote its elements by $a_1,\dots,a_s$. 
Let $\psi$ be the formula with variables $x_1,\dots,x_s$ that has for distinct $i,j \leq s$
a conjunct $E(x_i,x_j)$ if $E(a_i,a_j)$ holds in $\Omega$, and a conjunct $\neg E(x_i,x_j) \wedge x_i \neq x_j$ otherwise. Let $\phi_1$ be the formula $\psi \wedge E(x_{s+1},x_{s+2})$. 

Let $D_1$ be the domain of $\Gamma_1$, 
and consider the relation $R_1 \subseteq (D_1)^{s+2}$ defined by $\phi_1$ in $\Gamma_1$.
Let $R$ be a relational symbol of arity $s+2$. 
Let $\Theta$ be the structure with signature $\{R\}$, domain $D_1$, and
where $R$ denotes the relation $R_1$. 
It is clear that 
$\exists x_1,\dots,x_s. \, R(x_1,\dots,x_s,x,y)$ is a primitive positive definition of $E(x,y)$ in $\Theta$. 

Now consider the relation $R_2$ defined by $\phi_1$ in $\Gamma_2$ over the domain $D_2$
of $\Gamma_2$. Since $\Omega$ does not embed into $\Gamma_2$, 
the precondition of $\phi_1$ is never satisfied, and
the relation $R_2$ is empty. Hence, the relation $E(x,y)$ is certainly not first-order (and in particular not primitive positive) definable in $(D_2;R_2)$.
\end{proof}

\section{Preservation Theorems}\label{sect:preservation}
Let $\Gamma$ be a reduct of a
homogeneous finitely bounded Ramsey structure $\Delta$ with finite relational signature.
Our algorithm for $\exprpp(\Gamma)$ is based on the fact that if $R$ is not definable from $\Theta$, then there exists a certain kind of function which violates $R$; in order to decide whether or not $R$ is definable, the algorithm thus searches for such a function. In this section, we shall formulate this fact in more detail.

A structure is called \emph{$\omega$-categorical} iff its first-order theory has exactly one countable model up to isomorphism. For an $n$-tuple $a$ of elements of a structure $\Delta$, the \emph{type} of $a$ is the set of all first-order formulas with $n$ free variables $x_1,\ldots,x_n$ that are satisfied by $a$. By a theorem of Ryll-Nardzewski (see for example the textbook~\cite{Hodges}), a structure is $\omega$-categorical iff it has only finitely many different types of $n$-tuples (called \emph{$n$-types}), for each $n\geq 1$. From this characterization it is straightforward to see that structures which are homogeneous and have a finite relational signature are $\omega$-categorical; in particular, this is true for the structure $\Delta$ of Theorems~\ref{thm:main:pp} and~\ref{thm:main:exep}. For an $n$-tuple $a$ of elements of a structure $\Delta$, the \emph{orbit} of $a$ is the set $\{\alpha(a):\alpha\in\Aut(\Delta)\}$, where $\Aut(\Delta)$ denotes the \emph{automorphism group} of $\Delta$. It is well-known that a structure is $\omega$-categorical iff it has for every $n\geq 1$ only finitely many orbits of $n$-tuples (called \emph{$n$-orbits}). Moreover, in $\omega$-categorical structures two $n$-tuples have the same type iff they have the same orbit (see again~\cite{Hodges}). 
In particular, every $n$-ary relation 
definable over an $\omega$-categorical structure is a finite union of orbits of $n$-tuples.

Clearly, when $\Theta$ is a reduct of a structure $\Delta$, then $\Aut(\Theta)\supseteq\Aut(\Delta)$. Hence, if $\Delta$ is $\omega$-categorical, then so is $\Theta$; therefore, all structures that appear in this paper are $\omega$-categorical.

If $R$ is an $m$-ary relation on a set $D$, and $f \colon D^n\To D$ is a finitary operation on $D$, then we say that $f$ \emph{preserves} $R$ iff $f(r_1,\ldots,r_n)$ (calculated componentwise) is in $R$ for all $m$-tuples $r_1,\ldots,r_n\in R$.
In other words, when $r_i = (r_i^1,\dots,r_i^m) \in R$ for all $i \leq n$, we require that $(f(r_1^1,\dots,r_n^1),\dots,f(r_1^m,\dots,r_n^m)) \in R$. 
 Otherwise, we say that $f$ \emph{violates} $R$. Observe that a permutation $\alpha$ acting on the domain of a structure $\Theta$ is an automorphism iff both $\alpha$ and its inverse preserve all relations of $\Theta$. An \emph{endomorphism} of a structure $\Theta$ with domain $D$ is a unary operation $f \colon D\To D$ which preserves all relations of $\Theta$. A \emph{self-embedding} of $\Theta$ is an injective unary operation $f \colon D\To D$ which preserves all relations of $\Theta$ and all complements of relations in $\Theta$. A \emph{polymorphism} of $\Theta$ is a finitary operation $f \colon D^n\To D$ which preserves all relations of $\Theta$.

We can now state the preservation theorem used by our algorithm. Statement~(1) is well-known in model theory and follows from the standard proof of the theorem of Ryll-Nardzewski. Items~(2) and~(3) are consequences of the Theorem of {\L}os--Tarski and the Homomorphism Preservation Theorem; for these theorems, see~\cite{Hodges}, for the (straightforward) proofs of statements~(2) and (3) see~\cite{RandomMinOps}. Item~(4) is due to Bodirsky and Ne\v{s}et\v{r}il~\cite{BodirskyNesetrilJLC}.

\begin{theorem}\label{thm:preservation}
    Let $\Theta$ be an $\omega$-categorical structure, and let $R$ be a relation on its domain.
    \begin{itemize}
        \item[(1)] $R$ has a first-order definition in $\Theta$ iff $R$ is preserved by all automorphisms of $\Theta$.
        \item[(2)] $R$ has an existential definition in $\Theta$ iff $R$ is preserved by all self-embeddings of $\Theta$.
        \item[(3)] $R$ has an existential positive definition in $\Theta$ iff $R$ is preserved by all endomorphisms of $\Theta$.
        \item[(4)] $R$ has an primitive positive definition in $\Theta$ iff $R$ is preserved by all polymorphisms of $\Theta$.
    \end{itemize}
\end{theorem}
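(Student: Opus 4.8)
The plan is to prove the four equivalences together, using that $\Theta$ is $\omega$-categorical and hence, by Ryll--Nardzewski, has only finitely many orbits of $n$-tuples, each first-order definable by a single isolating formula. The ``only if'' directions are the soft ones, and I would dispatch all four at once by a one-line induction on formula complexity showing that each syntactic fragment is preserved by its associated maps: an automorphism preserves every first-order formula; a self-embedding, preserving relations together with their complements, preserves all quantifier-free formulas and hence existential formulas; an endomorphism, being a homomorphism, preserves existential positive formulas; and an $n$-ary polymorphism $f$ preserves any primitive positive formula, since conjunction, equality, and existential quantification are all compatible with applying $f$ coordinatewise to a family of solutions. This yields the forward implication in (1)--(4).

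For the converse of (1), if $R$ is preserved by $\Aut(\Theta)$ then it is a union of orbits of $n$-tuples; as there are only finitely many orbits, this union is finite, and taking the disjunction of the isolating formulas of the orbits in $R$ gives a first-order definition. For the converses of (2) and (3) I would run the $\omega$-categorical forms of the {\L}os--Tarski theorem and the Homomorphism Preservation Theorem. Concretely for (3): since $R$ is preserved by all endomorphisms it is in particular preserved by all automorphisms, hence first-order definable and a finite union of orbits; for a representative $a$ of each such orbit I form the existential positive formula $\phi_a(\bar x)$ recording the positive atomic facts holding at $a$. A tuple $b$ satisfies $\phi_a$ exactly when the partial map $a_i \mapsto b_i$ preserves the positive atomic diagram, which extends to an endomorphism of $\Theta$ by a standard amalgamation argument; preservation of $R$ under endomorphisms then forces $b \in R$, while every element of $R$ satisfies the $\phi_a$ of its own orbit, so $\bigvee_a \phi_a$ defines $R$. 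Part (2) is analogous, replacing the positive atomic diagram by the full quantifier-free diagram (including inequalities) and self-embeddings for endomorphisms, so that complements of relations are respected.

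The substantive case is the converse of (4), the theorem of Bodirsky and Ne\v{s}et\v{r}il. Assume $R$ is $m$-ary and preserved by every polymorphism. Let $S$ be the intersection of all $m$-ary primitive positive definable relations containing $R$; each such relation is preserved by $\Aut(\Theta)$ and hence is one of the finitely many unions of the finitely many orbits, so only finitely many of them exist, whence $S$ is itself primitive positive definable and $R \subseteq S$. It remains to show $R = S$. Supposing some $t \in S \setminus R$, I would read off a would-be polymorphism by taking the tuples of $R$ as its arguments and forming, for each coordinate $i \le m$, the column $v_i$ collecting the $i$-th entries of all tuples of $R$. A polymorphism violating $R$ is precisely a homomorphism $h$ from the corresponding power of $\Theta$ into $\Theta$ with $h(v_i) = t_i$, and the partial map $v_i \mapsto t_i$ preserves every relation holding among the columns; those relations are exactly the primitive positive consequences of $R$, all of which $t$ satisfies because $t \in S$, so the map is a partial homomorphism.

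The main obstacle will be to upgrade this partial homomorphism to a total polymorphism, i.e.\ to a homomorphism from a possibly infinite power of $\Theta$ into $\Theta$. This is where $\omega$-categoricity is used in earnest: since each finite power of $\Theta$ again has only finitely many orbits of tuples, one extends the map block of coordinates by block of coordinates and invokes a compactness (K\"onig's lemma) argument over the finitely many types to obtain a total operation $f$ preserving all relations of $\Theta$ and sending the columns of $R$ to $t$. Such an $f$ is a polymorphism violating $R$, contradicting the hypothesis, so $R = S$ is primitive positive definable. I expect the bookkeeping in this extension-and-compactness step --- guaranteeing simultaneous consistency of the homomorphism on all finite subsets of the index set --- to be the delicate part, and it is precisely the content of the cited result.
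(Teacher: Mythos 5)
The paper itself offers no proof of this theorem: it records (1) as folklore from the proof of Ryll--Nardzewski, attributes (2) and (3) to the $\omega$-categorical forms of the {\L}os--Tarski and Homomorphism Preservation Theorems (with proofs in \cite{RandomMinOps}), and (4) to Bodirsky and Ne\v{s}et\v{r}il \cite{BodirskyNesetrilJLC}. So the question is only whether your argument is sound, and it has two genuine gaps. The first is in the converses of (2) and (3): your formula $\phi_a$ records only the (positive) atomic facts holding \emph{among the entries of $a$ themselves}, so it is quantifier-free, and the claim that a partial map preserving this diagram ``extends to an endomorphism by a standard amalgamation argument'' is false. Take $\Theta$ to be the disjoint union of an infinite clique $K$ and an infinite independent set $I$, and $R=K$ as a unary relation. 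Then $R$ is preserved by every endomorphism (each $v\in K$ lies on an edge, whose image is an edge, so $h(v)\in K$), but the positive atomic diagram of a single $a\in K$ is trivial, so your $\phi_a$ defines the whole domain, and $a\mapsto b$ for $b\in I$ preserves that diagram while extending to no endomorphism. The repair is to use the full existential positive type of $a$: for every finite $F$ containing the entries of $a$, existentially quantify the positive atomic diagram of $F$ over $F\setminus a$; then use $\omega$-categoricity twice, once to replace this infinite family by a single formula (there are only finitely many orbits, hence finitely many inequivalent candidates), and once in a K\"onig's lemma argument to assemble a total endomorphism from the resulting partial homomorphisms. The same repair, with the full quantifier-free diagrams of finite supersets, is needed for (2).

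The second gap is in the converse of (4): you take \emph{all} tuples of $R$ as the arguments of the would-be polymorphism, so when $R$ is infinite your columns $v_i$ live in an infinite power $\Theta^R$ and the operation you build has infinite arity. Polymorphisms are finitary, so an infinitary homomorphism $\Theta^R\to\Theta$ sending the columns to some $t\notin R$ does not contradict the hypothesis that all polymorphisms preserve $R$. The missing step is the reduction to finitely many columns: since $R$ is preserved by automorphisms it is a finite union of orbits $O_1,\dots,O_k$; fix representatives $r^1,\dots,r^k$ and use only these as arguments. Because primitive positive formulas are orbit-invariant, such a formula holds on all of $R$ iff it holds on $r^1,\dots,r^k$, so your identification of the relations holding among the columns with the primitive positive consequences of $R$ survives, and the compactness argument over the $\omega$-categorical structure $\Theta^k$ now produces a genuine $k$-ary polymorphism $f$ with $f(r^1,\dots,r^k)=t\notin R$, the desired contradiction. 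With these two repairs your outline coincides with the standard proofs cited in the paper.
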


\section{Standardizing Functions}\label{sect:canonical}

Theorem~\ref{thm:preservation} tells us that if a relation $R$ is not definable in an $\omega$-categorical structure $\Theta$, then this is witnessed by a some finitary function on the domain of $\Theta$; the kind of function depends on the notion of definability. In this section, we show that in the context of Theorems~\ref{thm:main:pp} and~\ref{thm:main:exep}, this is even witnessed by a function which shows a certain regular behavior, making the search for such an (infinite!) function accessible to algorithms. We start by defining what we mean by regular behavior.

\subsection{Canonicity}

\begin{defn}
    For a structure $\Delta$ and $n\geq 1$, we write $S_n^\Delta$ for the set of all $n$-types in $\Delta$. The cardinality of $S_n^\Delta$ is denoted by $o^\Delta(n)$. We write $S^\Delta:=\bigcup_{n\geq 1} S_n^\Delta$. For an $n$-tuple $a\in\Delta$, we write $\typ^\Delta(a)$ for the element of $S_n^\Delta$ corresponding to $a$. We drop the reference to the structure in this notation when the structure is clear from the context.
\end{defn}

\begin{definition}
    A \emph{type condition} between two structures $\Xi$ and $\Omega$ is a pair $(s,t)$, where $s\in S_n^{\Xi}$ and $t\in S_n^{\Omega}$ for the same $n\geq 1$. A function $f \colon \Xi\To \Omega$ \emph{satisfies} a type condition $(s,t)$ iff for all $n$-tuples $a=(a_1,\ldots,a_n)$ in $\Xi$ of type $s$, the $n$-tuple $f(a)=(f(a_1),\ldots,f(a_n))$ in $\Omega$ is of type $t$.

    A \emph{behavior} is a set of type conditions between two structures. A function \emph{has behavior $B$} iff it satisfies all the type conditions of the behavior $B$. For $n\geq 1$, a behavior $B$ is called \emph{$n$-complete} iff for all types $s\in S^{\Xi}_n$ there is a type $t\in S^{\Omega}_n$ such that $(s,t)\in B$. It is called \emph{complete} iff it is $n$-complete for all $n\geq 1$.

    A function $f \colon  \Xi \To \Omega$ is \emph{canonical} ($n$-canonical) iff it has a complete ($n$-complete) behavior. 
    
    For $F\subseteq \Xi$ we say that $f$  \emph{satisfies a type condition $(s,t)$ on $F$} iff  for all $n$-tuples $a=(a_1,\ldots,a_n)$ in $F$ of type $s$ (in $\Xi$, not in the substructure induced by $F$), the $n$-tuple $f(a)=(f(a_1),\ldots,f(a_n))$ in $\Omega$ is of type $t$. The notions of \emph{having a behavior on $F$} and of \emph{being canonical on $F$} are then defined naturally.
\end{definition}

Observe that a complete behavior is just a function from $S^{\Xi}$ to $S^{\Omega}$ which respects the sorts, i.e., $n$-types are sent to $n$-types. We remark that not every such function is necessarily the behavior of a canonical function from $\Xi$ to $\Omega$, but every canonical function from $\Xi$ to $\Omega$ does define a function from $S^{\Xi}$ to $S^{\Omega}$. A behavior is just a partial function from $S^{\Xi}$ to $S^{\Omega}$ respecting the sorts.

\begin{defn}
    For a relational structure $\Delta$, we write $n(\Delta)$ for the supremum of the arities of the relations of $\Delta$.
\end{defn}

% OLD
%Suppose that $n(\Xi)$ is finite and that $\Xi$ has \emph{quantifier elimination}, i.e., every first-order formula in the language of $\Xi$ is equivalent to a quantifier-free formula over $\Xi$; this is in particular the case for the structure $\Delta$ of Theorems~\ref{thm:main:pp} and~\ref{thm:main:exep}, since homogeneity implies quantifier elimination.
%Then a total function from $S^{\Xi}_{n(\Xi)}$ to $S^{\Omega}_{n(\Xi)}$ automatically defines a total function from $S^{\Xi}_{}$ to $S^{\Omega}_{}$ since the type of any tuple in $\Xi$ is determined by the types of its subtuples of length ${n(\Xi)}$. In other words, a function $f: \Xi\To\Omega$ is canonical iff it is $n(\Xi)$-canonical. Note also that $S^{\Xi}_n$ is finite for every $n\geq 1$ since $\Xi$ is $\omega$-categorical (this follows if $\Xi$ has quantifier elimination and finite relational signature, cf.~\cite{Hodges}). 
%Therefore, canonical functions can be represented by finite objects, namely by functions from $S^{\Xi}_{n(\Xi)}$ to $S^{\Omega}_{n(\Xi)}$. If in addition $\Omega$ is $\omega$-categorical (in particular when $\Omega=\Xi$), then there are only finitely many functions from $S^{\Xi}_{n(\Xi)}$ to $S^{\Omega}_{n(\Xi)}$, and hence there exist only finitely many complete behaviors between $\Xi$ and $\Omega$, allowing to check all of them in an algorithm. Roughly, our goal in the following is to prove that functions witnessing that a relation $R$ is not definable in $\Theta$ can be assumed to be canonical; it will turn out that this is almost true.

% NEW:
Suppose that $n(\Xi)$ is finite and that $\Xi$ has \emph{quantifier
elimination}, i.e., every first-order formula in the language of $\Xi$
is equivalent to a quantifier-free formula over $\Xi$; this is in
particular the case for the structure $\Delta$ of
Theorems~\ref{thm:main:pp} and~\ref{thm:main:exep}, since homogeneity
implies quantifier elimination. Then the type of any tuple in $\Xi$ is
determined by the types of its subtuples of length ${n(\Xi)}$. If
moreover the same condition holds for $\Omega$ (in particular, if
$\Omega=\Xi$), and we set $n$ to be the maximum of $n(\Xi)$ and
$n(\Omega$), then a total function from $S^{\Xi}_{n}$ to
$S^{\Omega}_{n}$ automatically defines a total function from
$S^{\Xi}_{}$ to $S^{\Omega}_{}$. In other words, a function $f \colon
\Xi\To\Omega$ is canonical iff it is $n$-canonical. Note also that
$S^{\Xi}_k$ is finite for every $k\geq 1$ since $\Xi$ is
$\omega$-categorical (this follows if $\Xi$ has quantifier elimination
and finite relational signature, cf.~\cite{Hodges}). Therefore,
canonical functions can be represented by finite objects, namely by
functions from $S^{\Xi}_{n}$ to $S^{\Omega}_{n}$. Since $\Omega$ is
$\omega$-categorical as well, there are only finitely many functions
from $S^{\Xi}_{n}$ to $S^{\Omega}_{n}$, and hence there exist only
finitely many complete behaviors between $\Xi$ and $\Omega$, allowing
to check all of them in an algorithm. Roughly, our goal in the
following is to prove that functions witnessing that a relation $R$ is
not definable in $\Theta$ can be assumed to be canonical; it will turn out that this is almost true.

\subsection{Calling Ramsey}

\begin{lem}\label{lem:canonicalOnArbitrarilyLargeFinite}
    Let $\Xi$ be ordered Ramsey, let $\Omega$ be $\omega$-categorical, and let $f \colon \Xi\To \Omega$ be a function. Then for all finite substructures $F\subseteq \Xi$ there is a copy of $F$ in $\Xi$ on which $f$ is canonical.
\end{lem}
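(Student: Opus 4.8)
The plan is to reduce the (infinitary) notion of canonicity on a copy of $F$ to a finite list of monochromaticity conditions, and then to satisfy all of them at once by an iterated application of the Ramsey property of $\Xi$. Let $m$ be the number of elements of $F$. Since $\Xi$ is ordered, every $k$-element subset $A$ of $\Xi$ has a canonical increasing enumeration, and I let $p$ range over the finitely many order types of nonempty subsets of $F$ (of sizes $k=1,\dots,m$). For each such pattern $p$ I define a colouring $\chi_p$ of the copies of $p$ in $\Xi$ by $\chi_p(A):=\typ^\Omega(f(a_1),\dots,f(a_k))$, where $a_1<\dots<a_k$ is the increasing enumeration of $A$. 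Because $\Omega$ is $\omega$-categorical, $S_k^\Omega$ is finite, so each $\chi_p$ uses only finitely many colours, and there are only finitely many patterns $p$.

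The key reduction is that if a copy $F'$ of $F$ is monochromatic for every $\chi_p$ simultaneously, then $f$ is canonical on $F'$. To see this, take two $n$-tuples $a,a'$ in $F'$ of the same $\Xi$-type. Their equality patterns agree and $f$ respects equalities, so it suffices to treat the tuples $b,b'$ of distinct entries; these are injective, of the same length $k$, and of the same $\Xi$-type. Their underlying sets are subsets of $F'$ of a common pattern $p$, and $b,b'$ are obtained from the corresponding increasing enumerations by one and the same permutation $\sigma$, since the relative order of the entries is determined by the type. Monochromaticity of $\chi_p$ on $F'$ gives that the two increasing enumerations are sent to tuples of the same $\Omega$-type, and applying the fixed permutation $\sigma$ preserves equality of $\Omega$-types; hence $f(a)$ and $f(a')$ have the same $\Omega$-type, which is exactly canonicity on $F'$.

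It remains to produce a copy $F'$ of $F$ that is monochromatic for all $\chi_p$ at once, which I regard as the main point. First I transfer the Ramsey property from the age of $\Xi$ to $\Xi$ itself: every structure in the age embeds into $\Xi$, so a colouring of the copies of a finite $P$ in $\Xi$, restricted to an embedded witness $S$ with $S\rightarrow (H)^P_k$, yields a monochromatic copy of $H$ inside $\Xi$. To handle all patterns simultaneously I iterate. Enumerating the patterns as $p_1,\dots,p_r$, with $k_i$ the number of colours of $\chi_{p_i}$, I set $H_r:=F$ and, descending from $i=r$ to $i=1$, choose $H_{i-1}$ in the age of $\Xi$ with $H_{i-1}\rightarrow (H_i)^{p_i}_{k_i}$. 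Embedding $S:=H_0$ into $\Xi$ and peeling off one colour at a time from the outside in, I obtain nested copies $H_0\supseteq H_1'\supseteq\dots\supseteq H_r'$, where $H_i'$ is a copy of $H_i$ on which $\chi_{p_i}$ is constant; since monochromaticity is inherited by subsets, the final $H_r'$ is a copy of $F$ monochromatic for every $\chi_{p_i}$. By the reduction, $f$ is canonical on $H_r'$, as required.

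The main obstacle is precisely this simultaneous monochromaticity: a single Ramsey step controls only one pattern, and applying a later step to a fresh copy would in general destroy what earlier steps achieved. The nesting construction circumvents this by peeling colours off from the outside in, crucially exploiting that a subset of a monochromatic set is again monochromatic. A secondary point to get right is the bookkeeping in the reduction—separating the equality pattern, the choice of increasing enumeration, and the permutation $\sigma$—so that uniformity on increasing enumerations of subsets genuinely upgrades to full canonicity on $F'$.
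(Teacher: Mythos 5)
Your proof is correct and follows essentially the same route as the paper's: colour tuples (equivalently, copies of patterns, since $\Xi$ is ordered) by the $\Omega$-type of their image under $f$, then iterate the Ramsey property over the finitely many relevant patterns, nesting the monochromatic copies so that monochromaticity is inherited at each stage. The only (minor) difference is that you colour substructures of every size up to $|F|$ and hence obtain canonicity on the copy directly, whereas the paper colours only $n(\Xi)$-tuples and implicitly relies on the equivalence of canonicity with $n(\Xi)$-canonicity.
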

\begin{proof}
    Set $n:=n(\Xi)$, and let $m:=o^{\Omega}(n)$. Now $f$ defines a coloring of the $n$-tuples in $\Xi$ by $m$ colors: the color of a tuple $a$ is just the type of $f(a)$ in $\Omega$. 
    Note that if $P$, $S$ are ordered structures, then coloring copies of $P$ in $S$ is the same as coloring tuples of type $\tp(p)$, where $p$ is any tuple which enumerates $P$ -- this is because every copy of $P$ in $S$ contains precisely one tuple of type $\tp(p)$, and every tuple of type $\tp(p)$ in $S$ induces precisely one copy of $P$ in $S$.

    Given any finite substructure $F$ of $\Xi$, enumerate all types of $n$-tuples that occur in $F$ by $t_1,\ldots,t_k$. There is a substructure $S_1$ of $\Xi$ such that whenever all tuples of type $t_1$ in $S_1$ are colored with $m$ colors, then there exists a substructure $H_1$ of $S_1$ isomorphic to $F$ on which the coloring is constant. Further, there is a substructure $S_2$ of $\Xi$ such that whenever all tuples of type $t_2$ in $S_2$ are colored with $m$ colors, then there exists a substructure $H_2$ of $S_2$ isomorphic to $S_1$ on which the coloring is constant. We iterate this $k$ times, arriving at a structure $S_k$. Now going back the argument, we find that $S_k$ contains a copy of $F$ on which all colorings are constant. That means that $f$ is canonical on this copy.
\end{proof}

We remark that this lemma would be false if one dropped the order assumption.

We will now use Lemma~\ref{lem:canonicalOnArbitrarilyLargeFinite} in order to show that for ordered homogeneous Ramsey structures $\Delta$ with finite relational signature, arbitrary functions from $\Delta$ to $\Delta$ \emph{generate} canonical functions from $\Delta$ to $\Delta$. To introduce this notion, we make the following observation. The set $\End(\Delta)$ of endomorphisms of a structure $\Delta$ forms a transformation monoid, i.e., it is closed under composition $f\circ g$ and contains the identity function $\id$. Moreover, it is \emph{closed} (also called \emph{locally closed} or \emph{local}) in the topological sense, i.e., it is a closed subset of the space $D^D$, where $D$ is the domain of $\Delta$ equipped with the discrete topology. This implies that if a set $\F$ of functions from $D$ to $D$ preserves a set of given relations, then so does the smallest closed monoid containing $\F$. This motivates the following definition.

\begin{defn}\label{defn:generatesUnary}
    Let $D$ be a set, $g: D\To D$, and let $\F$ be a set of functions from $D$ to $D$. We say that $\F$ \emph{generates} $g$ iff $g$ is contained in the smallest closed monoid containing $\F$. For a structure $\Delta$ with domain $D$ and a function $f \colon D\To D$, we say that \emph{$f$ generates $g$
    over $\Delta$} iff $\{f\}\cup\Aut(\Delta)$ generates $g$. Equivalently, for every finite subset $F$ of $\Delta$, there exists a term $\alpha_0\circ (f\circ\alpha_1\circ\cdots\circ f\circ \alpha_n)$, where $n\geq 0$ and  $\alpha_i\in\Aut(\Delta)$ for $0\leq i\leq n$, which agrees with $g$ on $F$.
\end{defn}

%\begin{lem}\label{lem:generatesCanonical}
%    Let $\Delta$ be ordered homogeneous Ramsey with finite relational signature, and let $f \colon \Delta\To \Delta$. Then $f$ generates a canonical function $g:\Delta\To\Delta$ over $\Delta$.
%\end{lem}
%\begin{proof}[First proof]

Note that every operation $f \colon D \To D$ generates
an operation $g$ over $\Delta$ that is canonical 
as a function from $\Delta$ to $\Delta$, namely the
identity operation. 
What we really want is that $f$ generates over $\Delta$ 
a canonical function $g$ which represents $f$ in a certain sense 
-- it should be possible to retain specific properties of $f$ when passing to the canonical function. For example, when $f$ violates a given relation $R$, then we would like to have a canonical $g$ which also violates $R$ -- this is clearly not the case for the identity function. 
Unfortunately, $f$ might be such that it violates a relation $R$
without generating any
 function that is canonical as a function from $\Delta$ to $\Delta$ 
 and that violates $R$. 
 % Example: \Delta = (Q,<) and f is cyclic shift.

We therefore have to refine our method: we would like to fix constants $c_1,\ldots,c_n\in\Delta$ which witness that $f$ violates $R$ and then have canonical behavior relative to these constants, i.e., on the structure $(\Delta,c_1,\ldots,c_n)$ which is $\Delta$ enriched by the constants $c_1,\ldots,c_n$. In order to do this, we must assure that  $(\Delta,c_1,\ldots,c_n)$ still has the Ramsey property. This leads us into topological dynamics.

\subsection{An escapade in topological dynamics}
The goal of this subsection is to show the following proposition by using a recent characterization of the Ramsey property in topological dynamics.

\begin{prop}\label{prop:addingConstantsPreservesRamsey}
    Let $\Delta$ be ordered homogeneous Ramsey, and let $c_1,\ldots,c_n\in \Delta$. Then $(\Delta,c_1,\ldots,c_n)$ is ordered homogeneous Ramsey as well.
\end{prop}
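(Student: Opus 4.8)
The plan is to prove Proposition~\ref{prop:addingConstantsPreservesRamsey} by reducing each of the three properties --- ordered, homogeneous, Ramsey --- for the expanded structure $(\Delta,c_1,\ldots,c_n)$ to the corresponding property of $\Delta$. I would first dispose of the two easy parts. The structure $(\Delta,c_1,\ldots,c_n)$ is \emph{ordered} for free, since it contains the same linear order as $\Delta$ (adjoining constants does not touch the order relation). \emph{Homogeneity} is also elementary: an isomorphism between two finite substructures of $(\Delta,c_1,\ldots,c_n)$ is in particular an isomorphism between finite substructures of $\Delta$ that moreover fixes each $c_i$ lying in its domain; by homogeneity of $\Delta$ it extends to some $\alpha\in\Aut(\Delta)$, and since $\alpha$ fixes each relevant $c_i$, it is already an automorphism of $(\Delta,c_1,\ldots,c_n)$. (One should record here that adding finitely many constants to an $\omega$-categorical homogeneous structure keeps it homogeneous and $\omega$-categorical, the age now consisting of finite substructures with some elements labelled by the constants.)

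The substantive part is the \emph{Ramsey} property, and this is where I would invoke the topological-dynamics characterization advertised in the subsection heading. The key result (due to Kechris--Pestov--Todor\v{c}evi\'c) states that for an ordered homogeneous structure $\Delta$, the age of $\Delta$ is a Ramsey class if and only if the automorphism group $\Aut(\Delta)$ is \emph{extremely amenable}, i.e.\ every continuous action of $\Aut(\Delta)$ on a compact Hausdorff space has a fixed point. So the plan is: translate "Ramsey" into "extremely amenable" for $\Delta$, prove that $\Aut(\Delta,c_1,\ldots,c_n)$ is extremely amenable, and translate back. The translation back is legitimate precisely because $(\Delta,c_1,\ldots,c_n)$ is itself ordered and homogeneous, which is exactly what the two easy paragraphs above established --- so the KPT equivalence applies to it as well.

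The heart of the matter is therefore the group-theoretic claim that extreme amenability passes from $\Aut(\Delta)$ to the point stabilizer $\Aut(\Delta,c_1,\ldots,c_n)$. I would identify $\Aut(\Delta,c_1,\ldots,c_n)$ with the pointwise stabilizer $G_{\{c_1,\ldots,c_n\}}$ of the finite set of constants inside $G:=\Aut(\Delta)$. This stabilizer is an \emph{open} subgroup of $G$ (a basic open set in the topology of pointwise convergence), and it has \emph{finite index} in its setwise analogue; more to the point, the orbit $G\cdot(c_1,\ldots,c_n)$ is finite by $\omega$-categoricity, so $G_{\{c_1,\ldots,c_n\}}$ is an open subgroup whose coset space is the finite discrete orbit. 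The structural fact I would rely on is that an open subgroup of finite index --- equivalently, a point stabilizer with finite orbit --- inherits extreme amenability. I expect \textbf{this inheritance step to be the main obstacle}: extreme amenability is not in general inherited by arbitrary closed subgroups, so one genuinely needs the finiteness of the orbit (or equivalently that the quotient $G/G_{\{c_1,\ldots,c_n\}}$ is a finite, hence compact, $G$-space). I would argue it by taking any continuous action of $H:=G_{\{c_1,\ldots,c_n\}}$ on a compact space $X$, inducing it up to a continuous $G$-action on the compact space of "sections" over the finite quotient $G/H$ (concretely $X^{G/H}$ with the shift action, which is compact since $G/H$ is finite), applying extreme amenability of $G$ to get a $G$-fixed point, and reading off from it an $H$-fixed point in $X$. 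The bookkeeping in this induction argument is the one place where care is needed; everything else is formal.
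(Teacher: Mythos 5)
Your overall architecture is exactly the paper's: dispose of \emph{ordered} and \emph{homogeneous} directly, translate \emph{Ramsey} into extreme amenability of the automorphism group via Kechris--Pestov--Todor\v{c}evi\'c, and show that extreme amenability passes to the stabilizer of the constants by inducing an $H$-action on a compact space up to a $G$-action. However, there is a concrete error at the pivot of your argument: the orbit $G\cdot(c_1,\ldots,c_n)$ is \emph{not} finite in general, and the stabilizer $H=\Aut(\Delta,c_1,\ldots,c_n)$ does \emph{not} have finite index in $G=\Aut(\Delta)$. $\omega$-categoricity gives finitely many orbits of $n$-tuples, but each single orbit is typically infinite; already for $\Delta=(\mathbb{Q};<)$ and one constant $c$, the orbit of $c$ is all of $\mathbb{Q}$ and the coset space is countably infinite. (The finite-index fact you correctly recall --- pointwise versus setwise stabilizer --- is irrelevant to the index of $H$ in $G$.) Consequently ``the quotient is finite, hence compact'' is unavailable, and your stated diagnosis of what makes the inheritance work (``one genuinely needs the finiteness of the orbit'') misidentifies the relevant hypothesis.

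The property that actually does the work, and the one the paper isolates, is that $H$ is an \emph{open} subgroup, so that the coset space $H\backslash G$ is \emph{discrete} (countable, not finite). Compactness of $X^{H\backslash G}$ then comes from Tychonoff regardless of the cardinality of $H\backslash G$, so that part of your plan survives with a corrected justification; but the continuity of the induced action --- which you defer as ``bookkeeping'' --- is precisely where discreteness is needed and where the paper spends its effort. One chooses a section $s\colon H\backslash G\to G$, forms the $H$-valued cocycle $\alpha(w,g)=s(w)\,g\,s(wg)^{-1}$, defines the co-induced action by $(g\cdot\xi)(w)=\alpha(w,g)\cdot\xi(wg)$, and verifies continuity of $s$, of $\alpha$, and of $(g,\xi)\mapsto\xi(wg)$ using openness of the conjugates of $H$; the plain shift action on the product is not enough, since it is the cocycle twist that lets you read off an $H$-fixed point of $X$ from a $G$-fixed point of $X^{H\backslash G}$. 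So the lemma you need (``open subgroups of extremely amenable groups are extremely amenable'') is true and is exactly the paper's, but your proposal as written rests on a false finiteness claim and omits the continuity verification that must replace it.
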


 We remark that it is easy to see that the expansion of any homogeneous structure by finitely many constants is again homogeneous, and that the nontrivial part of the proposition concerns the Ramsey property. We do not know if the same proposition holds if one does not assume $\Delta$ to be ordered.
 
 To prove the proposition, we use a theorem from~\cite{Topo-Dynamics}. A \emph{topological group} is a group $(G;\cdot)$ together with a topology on $G$ such that $(x,y) \mapsto xy^{-1}$ is continuous from $G^2$ to $G$. A group action of $G$ on a topological space
 $X$ is \emph{continuous} iff it is continuous as a function from $G \times X$ into $X$.

\begin{defn}
    A topological group is \emph{extremely amenable} iff any continuous action of the group on a compact Hausdorff space has a fixed point.
\end{defn}

\begin{thm}[Kechris, Pestov, Todorcevic \cite{Topo-Dynamics}]\label{thm:KPT}
    An ordered homogeneous structure is Ramsey iff its automorphism group is extremely amenable.
\end{thm}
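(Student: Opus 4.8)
The plan is to prove both implications of the equivalence through the \emph{greatest ambit} of the automorphism group $G:=\Aut(\Delta)$, translating each side into a purely combinatorial statement about colorings of $G$. First I would record the relevant structure of $G$: as the automorphism group of a countable homogeneous relational structure it is a closed subgroup of the symmetric group on the domain, hence a non-archimedean topological group with a neighborhood basis of the identity given by the open subgroups $G_{\bar a}$, the pointwise stabilizers of finite tuples $\bar a$ enumerating finite substructures $A\subseteq\Delta$. The key dictionary is that, by homogeneity, every embedding of a finite substructure into $\Delta$ extends to an automorphism, so the left coset space $G/G_{\bar a}$ is $G$-equivariantly identified with the set of copies of $A$ in $\Delta$; here the hypothesis that $\Delta$ is \emph{ordered} does real work, since the order equips every abstract copy of $A$ with a canonical enumeration and thereby removes the $|\Aut(A)|$-fold ambiguity between copies and enumerated copies. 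Under this identification a coloring of the copies of $A$ is exactly a finite-valued function on $G$ that is constant on the cosets $gG_{\bar a}$, that is, left-uniformly continuous.

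Next I would reformulate extreme amenability through the finitary criterion that $G$ is extremely amenable if and only if for every finite left-uniformly continuous coloring $c\colon G\To\{1,\dots,k\}$ and every finite $F\subseteq G$ there is $g\in G$ with $c$ constant on $gF$. For the direction from extreme amenability I would let $G$ act by left translation on the compact space $\{1,\dots,k\}^{G}$ and apply the fixed-point property to the orbit closure $\overline{G\cdot c}$, which is a genuine compact flow precisely because $c$ is left-uniformly continuous: a point fixed by the left-translation action is a constant function $\equiv j$, and its membership in the orbit closure yields, for the prescribed finite $F$, an element $g$ with $c(gf)=j$ for all $f\in F$. The converse, that the coloring property forces a fixed point in an arbitrary compact $G$-flow, I would obtain from the greatest ambit $S(G)$, the Gelfand spectrum of the bounded left-uniformly continuous functions together with its universal property among ambits, by a compactness argument manufacturing an honest fixed point out of the approximate fixed points supplied by the colorings.

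With this criterion in hand, the theorem reduces to matching the two combinatorial statements through the dictionary of the first paragraph. A finite coloring of the copies of $A$ in $\Delta$ is a left-uniformly continuous $k$-coloring $c$ of $G$; I would fix one copy of the larger structure $B$ inside $\Delta$ and let $F=\{f_1,\dots,f_r\}\subseteq G$ enumerate the finitely many embeddings of $A$ into that copy, so that the tuples $f_i\bar a$ run over the sub-copies of $A$ in $B$. Then for $g\in G$ the copy $g\cdot B$ has its sub-copies of $A$ enumerated by the tuples $gf_i\bar a$, and the condition that $c$ is constant on $gF$ says exactly that all sub-copies of $A$ inside $g\cdot B$ receive one color. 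Hence the finitary criterion is equivalent to the assertion that every finite coloring of the copies of $A$ in $\Delta$ admits a monochromatic copy of $B$; and this infinitary Ramsey statement for $\Delta$ is in turn equivalent, by a standard compactness argument over the Fra\"iss\'e limit using amalgamation and joint embedding, to the Ramsey property of the age as formulated in the excerpt. Running the chain of equivalences in both directions proves the theorem.

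I expect the main obstacle to be the general topological-dynamics step, namely establishing the finitary coloring criterion for extreme amenability. This requires the construction of the greatest ambit via the algebra of bounded uniformly continuous functions and its universality, and a careful bookkeeping of the left and right uniformities so that structural colorings correspond to the correct class of uniformly continuous functions and so that a single fixed point produces one color class that is met by the \emph{entire} translate $gF$ at once; getting the sidedness right here is exactly what aligns the group-theoretic criterion with the structural Ramsey statement. A secondary but genuine subtlety is the tightness of the dictionary in the first paragraph: without the order assumption $G/G_{\bar a}$ records enumerated copies rather than abstract ones, and the correspondence with unordered structural Ramsey breaks down, which is precisely why the hypothesis of being ordered is imposed.
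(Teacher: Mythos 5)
The first thing to say is that the paper contains no proof of this statement: Theorem~\ref{thm:KPT} is quoted from Kechris--Pestov--Todorcevic \cite{Topo-Dynamics} and used as a black box (what the paper proves in its vicinity is only the lemma that open subgroups inherit extreme amenability, which together with the quoted theorem yields Proposition~\ref{prop:addingConstantsPreservesRamsey}). So there is no in-paper argument to compare yours against; what you have produced is in effect a reconstruction of the original KPT proof, and in outline it is the correct one. Your dictionary is right: since ordered finite structures are rigid, homogeneity identifies the copies of $A$ in $\Delta$ with the left cosets $G/G_{\bar a}$, so finite colorings of copies are exactly the finite colorings of $G$ that are constant on left cosets of an open subgroup. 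Your orbit-closure argument for the implication from extreme amenability to the coloring property is sound --- the translation flow on $\overline{G\cdot c}$ is continuous because conjugates of open subgroups are open, and in this non-archimedean setting that observation is essentially all the sidedness bookkeeping amounts to --- and the compactness transfer between the infinitary partition statement for $\Delta$ and the finitary Ramsey property of its age is standard.

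The genuine gap --- which, to your credit, you flag yourself --- is the converse half of your finitary criterion: that the coloring property implies extreme amenability. That implication is the mathematical heart of the theorem, and ``a compactness argument manufacturing an honest fixed point out of the approximate fixed points'' is a description of what must be done, not a proof. Carrying it out requires constructing the greatest ambit $S(G)$ (for a closed subgroup of the infinite symmetric group, e.g.\ as an inverse limit of Samuel compactifications of the coset spaces $G/V$ over open subgroups $V$), proving its universality among compact $G$-flows so that a fixed point there yields fixed points in every flow, and then running a finite-intersection-property argument converting the monochromatic translates supplied by the colorings into an actual fixed point. As it stands, your text is an accurate and well-organized road map of the KPT proof with the step that carries most of the weight left as an assertion; that is arguably a fair reflection of how the present paper itself treats the result, namely as a citation.
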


Thus the automorphism group of the structure $\Delta$ in Proposition~\ref{prop:addingConstantsPreservesRamsey} is extremely amenable. Note that the automorphism group of $(\Delta,c_1,\ldots,c_n)$ is an open subgroup of $\Aut(\Delta)$. The proposition thus follows from the following fact.

\begin{lem}
    Let $G$ be an extremely amenable group, and let $H$ be an open subgroup of $G$. Then $H$ is extremely amenable.
\end{lem}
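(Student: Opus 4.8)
The plan is to prove that $H$ is extremely amenable by starting from an arbitrary continuous action of $H$ on a nonempty compact Hausdorff space $X$, co-inducing from it a continuous action of $G$ on a larger compact Hausdorff space $Y$, applying extreme amenability of $G$ to obtain a $G$-fixed point in $Y$, and finally transferring that fixed point back to an $H$-fixed point in $X$. The only input about $H$ that will be used is its openness, and it will enter in exactly one place.

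First I would record the elementary consequence of $H$ being open: the left coset space $H\backslash G$ is discrete, and for every $x\in G$ and every $g_0\in G$ there is a neighborhood $U$ of $g_0$ such that for all $g\in U$ the element $xg$ lies in the single coset $H\cdot(xg_0)$, with the $H$-part $g\mapsto h(g):=xg(xg_0)\inv$ depending continuously on $g$. Then I would set up the co-induced space
$$Y := \{ f \colon G \To X \mid f(hg) = h\cdot f(g)\ \text{for all}\ h\in H,\ g\in G\},$$
topologized as a subspace of $X^G$ with the product topology, and let $G$ act by $(g\cdot f)(x):= f(xg)$. I would verify that this is a genuine left action leaving $Y$ invariant, and that $Y$ is closed in $X^G$, since each equivariance constraint $f(hg)=h\cdot f(g)$ is the equalizer of two continuous maps into the Hausdorff space $X$. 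Hence $Y$ is compact Hausdorff by Tychonoff, and it is nonempty because a choice of coset representatives identifies $Y$ with $X^{H\backslash G}$.

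The heart of the argument is the continuity of the $G$-action on $Y$, and I expect this to be the main obstacle. Since $Y$ carries the product topology, it suffices to prove that $(g,f)\mapsto f(xg)$ is continuous for each fixed $x\in G$. Here the openness of $H$ is used: near a given $g_0$ one writes $xg=h(g)\cdot(xg_0)$ with $h(g)\in H$ continuous in $g$, so equivariance of $f\in Y$ gives $f(xg)=h(g)\cdot f(xg_0)$; continuity then follows by composing the continuous map $(g,f)\mapsto\bigl(h(g),\,f(xg_0)\bigr)$ with the continuous $H$-action $H\times X\To X$. This local-triviality step is precisely where we need $H$ open rather than merely closed.

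Finally, since $G$ is extremely amenable and $Y$ is a nonempty compact Hausdorff $G$-space with continuous action, there is a $G$-fixed point $f_0\in Y$. Unwinding $g\cdot f_0=f_0$ yields $f_0(xg)=f_0(x)$ for all $x,g\in G$, so $f_0$ is the constant function with some value $x_0\in X$; the defining equivariance $f_0(h)=h\cdot f_0(e)$ then reads $x_0=h\cdot x_0$ for all $h\in H$. Thus $x_0$ is an $H$-fixed point of $X$, which completes the proof.
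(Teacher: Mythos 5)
Your proof is correct, and it is the same underlying strategy as the paper's: co-induce the $H$-action on $X$ up to a continuous $G$-action on a compact space, apply extreme amenability of $G$, and evaluate the resulting fixed point at the trivial coset. The difference is in the model of the co-induced action. The paper works with the product $X^{H\backslash G}$ and twists the shift by an explicit cocycle $\alpha(w,g)=s(w)gs(wg)\inv$ built from a section $s$ of $G\to H\backslash G$; compactness is then immediate from Tychonoff, but one must verify the cocycle identity, the continuity of $\alpha$ (which is where openness of $H$ enters, via discreteness of $H\backslash G$), and the eventual stabilization $wg_n=wg$. You instead realize the co-induced space intrinsically as the closed subspace $Y\subseteq X^G$ of $H$-equivariant functions with the right-translation action, which eliminates the section and the cocycle from the action formula entirely (the section survives only in the nonemptiness check, where you correctly note $Y\cong X^{H\backslash G}$); the price is that you must check $Y$ is closed, and the openness of $H$ reappears in your local-triviality argument $f(xg)=h(g)\cdot f(xg_0)$ for $g$ near $g_0$, with $h(g)=xgg_0\inv x\inv\in H$ exactly because $x\inv Hx$ is an open neighborhood of the identity --- this is the same mechanism as the paper's $g_ng^{-1}\in k^{-1}Hk$ step, just phrased without nets. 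Both versions are complete; yours is arguably cleaner in that the only computation is the two-line fixed-point unwinding, and you are also slightly more careful than the paper in restricting to nonempty $X$ so that $Y\neq\emptyset$. (One terminological quibble: $H\backslash G=\{Hg\}$ is the set of \emph{right} cosets, as the paper says, not the ``left coset space.'')
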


\begin{proof}
Let $H$ act continuously on a compact space $X$; we will show that
this action has a fixed point. Denote by $H \backslash G$ the set of
right cosets of $H$ in $G$, i.e. $H \backslash G = \{Hg : g \in G\}$.
%and note that as $H$ is open, $H \backslash G$ is discrete. 
Denote by
$\pi \colon G \to H \backslash G$ the quotient map and let $s \colon
H \backslash G \to G$ be a section for $\pi$ (i.e., a mapping
satisfying $\pi \circ s = \mathrm{id}$) such that $s(H) = 1$.
Let $\alpha$ be
the map from $H \backslash G \times G \to H$ defined by
\[ \alpha(w, g) = s(w) g s(wg)^{-1}  \; .\]
For $w \in H \backslash G$ and $g \in G$,
note that $s(w)g$ and $s(wg)$ lie
in the same right coset of $H$, namely $wg$, and hence
the image of $\alpha$ is $H$.
The map $\alpha$ satisfies\footnote{Such maps are called \emph
{cocycles}, and the given identity is called the \emph{cocycle
identity}.}
\begin{align*}
\alpha(w,g_1g_2) & = s(w)g_1 g_2 (s(w g_1 g_2))^{-1} \\
& = s(w) g_1 s(w g_1) s(w g_1)^{-1} g_2 (s(w g_1 g_2))^{-1} \\
& = \alpha(w,g_1) \alpha(wg_1,g_2) \; .
\end{align*}

As $H$ is open, $H \backslash G$ is discrete. Hence, $s$ is
continuous, and therefore $\alpha$ is continuous as a composition of
continuous maps.
The \emph{co-induced action} $G \curvearrowright X^{H \backslash G}$
of $G$ on the product space $X^{H \backslash G}$ is defined by
\[ (g \cdot \xi)(w) = \alpha(w, g) \cdot \xi(wg). \]
To check that this action is continuous, it suffices to see that the
map $(g, \xi) \mapsto (g \cdot \xi)(w)$ is continuous $G \times X^{H
\backslash G} \to X$ for every fixed $w \in H \backslash G$. We
already know that $\alpha$ is continuous and that the action $H
\curvearrowright X$ is continuous. To see that $(g, \xi) \mapsto \xi
(wg)$ is continuous, suppose that $(g_n, \xi_n) \to (g, \xi)$. Let $w=Hk$. As
$g_n \to g$ and $k^{-1}Hk$ is open, we will have that eventually $g_ng^{-1}
\in k^{-1}Hk$, giving that $kg_n (kg)^{-1} \in H$, or, which is the same, $Hkg_n = Hkg$.
We obtain that for sufficiently large $n$, $wg_n = wg$. Therefore $\xi_n(w g_n) \to \xi(wg)$.

By the extreme amenability of $G$, this action has a fixed point $
\xi_0$. Now we check that $\xi_0(H) \in X$ is a fixed point of the
action $H \curvearrowright X$. Indeed, for any $h \in H$, $h \cdot
\xi_0 = \xi_0$ and we have
\[
\xi_0(H) = (h \cdot \xi_0)(H) = \alpha(H, h) \cdot \xi_0(Hh) = h
\cdot \xi_0(H),
\]
finishing the proof.
\end{proof}

\ignore{
\begin{proof}
    Let $H$ act on a compact space $X$ continuously; we have to show that there exists $x\in X$ such that $hx=x$ for all $h\in H$.

    We define an action of $G$ on the product space $X^{G/H}$, where $G/H$ is the set of left cosets of $H$ in $G$ (i.e., the set of sets of the form $gH$, where $g\in G$). Observe that $G/H$ is countable, since the left cosets are a partition of $G$ into open sets, and since $G$ is separable. Let $g_0H, g_1H,\ldots$ be an enumeration of $G/H$, where each $g_i$ is a fixed representative of its coset. Then we can view each element of $X^{G/H}$ as a sequence of elements of $X$ of length $\omega$, of which the $i$-th element is the image of $g_iH$ under the function. For a sequence $a\in X^{G/H}$, we write both $a_i$ and $a(g_iH)$ for the $i$-th element of the sequence.

    Now define the action on $X^{G/H}$ as follows: for $g\in G$ and $a\in X^{G/H}$, set $(g(a))_i:=a(g\inv g_iH)$. It is straightforward to prove that this defines indeed a group action, i.e., that $(gh)(a)=g(h(a))$, for all $g,h\in G$ and all $a\in X^{G/H}$.

    We claim that this action is continuous. To see this, let $b\in X^{G/H}$ be given, and let $V\subseteq X^{G/H}$ be an open neighborhood of $b$ in $X^{G/H}$. Without loss of generality, $V=V_1\mult \cdots \mult V_n\mult X\mult X\mult \cdots$, where $V_i\subseteq X$ are open. For every $1\leq i\leq n$, there exists an open neighborhood $O_i$ of $1\in G$ such that $e\inv g_i\in g_iH$ for all $e\in O$, since the multiplication of $G$ is continuous and since $g_iH$ is an open neighborhood of $g_i$. Set $O$ to be the intersection of the $O_i$. Now if $e \in O$ and $b'\in V$, then $e(b')_i=b'(e\inv g_iH)=b'(g_i H)\in V_i$, for all $1\leq i\leq n$. Therefore, $e(b')\in V$, and we have proven that the group action maps all elements of $O\mult V$ into $V$. Now let $g\in G$ and $a,b\in X^{G/H}$ be so that $g(a)=b$, and let $V\subseteq X^{G/H}$ be an open neighborhood of $b$ in $X^{G/H}$. Then $g\inv[V]$ is an open neighborhood of $a$. By our observation above, there exists an open neighborhood $O\subseteq G$ of $1\in G$ such that $e a'\in g\inv[V]$ for all $e\in O$ and all $a'\in g\inv[V]$. Now let $g'\in g[O]$ and $a'\in g\inv[V]$ be arbitrary, and write $g':=ge$, where $e\in O'$. Then we have $g'(a')=(ge)(a')=g(e(a'))\in g[g\inv[V]]=V$. Therefore, the open neighborhood $g[O]\mult g\inv[V]$ of $(g,a)\in G\mult X^{G/H}$ is mapped into $V$ be the action.

    Now consider the product space $X^G$. Consider the subspace $F$ of $X^G$ which consists of those $a\in X^G$ which have the property that for all $i\in \omega$ and all $h\in H$, $a(g_ih)=h(a(g_i))$. Clearly, every element of $F$ is determined by its values on the $g_i$, and the mapping $\sigma: F\To X^{G/H}$ defined by $\sigma(a)(g_iH)=a(g_i)$ is a homeomorphism between $F$ and $X^{G/H}$. Thus, $G$ acts on $F$ continuously by the rule $g(a):=\sigma\inv (g(\sigma(a)))$. We calculate this action: for $g\in G$, $i\in \omega$, $h\in H$ and $a\in X^G$, we get
    \begin{align*}
    g(a)(g_i h) &=  \; hg(a)(g_i)=h(\sigma\inv(g(\sigma(a))))(g_i)\\
    & =\; h(g(\sigma(a))(g_iH))=h(\sigma(a)(g\inv g_iH)) \\
    &=h(a(g\inv g_i)).
    \end{align*}

    Since $G$ is extremely amenable, its action on $F$ has a fixed point $a\in F$, that is, $g(a)=a$ for all $g\in G$. In particular, $h(a)(1)=a(1)$ for all $h\in H$. But by our calculation above and assuming wlog that $1$ is the representative of its class $g_iH$, we have $h(a)(1)=a(h\inv)$. On the other hand, by the definition of $F$, $a(h\inv)=h\inv (a(1))$. Putting this together, we get $a(1)=h\inv(a(1))$, for all $h\in H$. Thus, $a(1)$ is a fixed point of $X$ for the action of $H$ on $X$.
\end{proof}
}

\subsection{Minimal unary functions}
Using Proposition~\ref{prop:addingConstantsPreservesRamsey}, we can now prove a `canonisation lemma' that will be central in
what follows. 
%improve Lemma~\ref{lem:generatesCanonical} and show the following more meaningful statement.

\begin{lem}\label{lem:generatesCanonicalWithConstants}
    Let $\Delta$ be ordered homogeneous Ramsey with finite relational signature, $f \colon \Delta \To \Delta$, and let $c_1,\ldots,c_n\in \Delta$. Then $f$ generates over $\Delta$ a function which agrees with $f$ on $\{c_1,\ldots,c_n\}$ and which is canonical as a function from $(\Delta,c_1,\ldots,c_n)$ to $\Delta$.
\end{lem}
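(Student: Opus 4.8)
The plan is to combine the two main tools that have just been established: Lemma~\ref{lem:canonicalOnArbitrarilyLargeFinite}, which lets us make a function canonical on any single finite set after composing with automorphisms, and Proposition~\ref{prop:addingConstantsPreservesRamsey}, which guarantees that the enriched structure $(\Delta,c_1,\dots,c_n)$ is still ordered homogeneous Ramsey. The first step is to reduce the statement to an application of Lemma~\ref{lem:canonicalOnArbitrarilyLargeFinite} applied \emph{to the enriched structure} rather than to $\Delta$ itself. Set $\Xi:=(\Delta,c_1,\dots,c_n)$ and $\Omega:=\Delta$. By Proposition~\ref{prop:addingConstantsPreservesRamsey}, $\Xi$ is ordered Ramsey, and $\Omega$ is $\omega$-categorical, so the hypotheses of the lemma are met. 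The point of passing to $\Xi$ is that ``canonical as a function from $(\Delta,c_1,\dots,c_n)$ to $\Delta$'' is exactly ``canonical as a function from $\Xi$ to $\Omega$'' in the sense of the earlier definition, so types are now computed relative to the constants, which is what we want.

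Next I would arrange a diagonal/compactness argument to pass from ``canonical on each finite set'' to ``generated and globally canonical.'' Enumerate $\Delta$ as $d_1,d_2,\dots$ with $c_1,\dots,c_n$ among the first few elements, and let $F_1\subseteq F_2\subseteq\cdots$ be finite substructures of $\Xi$ with union $\Delta$ and with $\{c_1,\dots,c_n\}\subseteq F_1$. Applying Lemma~\ref{lem:canonicalOnArbitrarilyLargeFinite} to $\Xi$, $\Omega$, and $f$, for each $k$ there is a self-embedding (copy map) $e_k$ of $\Xi$ such that $f\circ e_k$ is canonical on $F_k$; since $\Xi$ is homogeneous we may take $e_k\in\Aut(\Delta)$ fixing each $c_i$, so that $f\circ e_k$ agrees with $f$ on the constants. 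Composing with automorphisms of $\Delta$ keeps us inside the monoid generated by $\{f\}\cup\Aut(\Delta)$, so each $f\circ e_k$ is generated by $f$ over $\Delta$. The family $\{f\circ e_k\}$ then has, by $\omega$-categoricity, only finitely many possible behaviors on each fixed finite set, so a standard König's-lemma/compactness extraction produces a single limit function $g$ in the closure of $\{f\circ e_k : k\}$ that is canonical on all of $\Xi$ at once and still agrees with $f$ on $\{c_1,\dots,c_n\}$. Because $\End$ (and the generated monoid) is topologically closed, $g$ is generated by $f$ over $\Delta$, which is the conclusion.

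I expect the main obstacle to be the bookkeeping in this extraction step, specifically ensuring that the limit function is canonical for \emph{all} types simultaneously rather than only on each individual $F_k$. The subtlety is that canonicity of $f\circ e_k$ on $F_k$ is witnessed by a type condition that could differ from $F_k$ to $F_{k+1}$; one must argue that by thinning out a subsequence (using that there are finitely many complete behaviors between $\Xi$ and $\Omega$, as noted after the definition of canonicity) we can force the behaviors to stabilize, so that a cofinal subsequence shares one common complete behavior $B$. The limit along this subsequence then realizes $B$ on every finite set, hence is canonical. The requirement that $f\circ e_k$ fix the constants is what makes canonicity \emph{relative to the constants} meaningful and is precisely where Proposition~\ref{prop:addingConstantsPreservesRamsey} is indispensable: without knowing $(\Delta,c_1,\dots,c_n)$ is Ramsey, Lemma~\ref{lem:canonicalOnArbitrarilyLargeFinite} could not be invoked for $\Xi$, and we would be stuck with canonicity over $\Delta$ alone, which, as the discussion preceding the lemma explains, is too weak to retain the property that $f$ violates a given relation $R$.
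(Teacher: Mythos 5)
Your overall strategy is exactly the paper's: use Proposition~\ref{prop:addingConstantsPreservesRamsey} to see that $(\Delta,c_1,\dots,c_n)$ is again ordered homogeneous Ramsey, apply Lemma~\ref{lem:canonicalOnArbitrarilyLargeFinite} to an increasing exhaustion $F_1\subseteq F_2\subseteq\cdots$ of the expanded structure, replace the ``copy maps'' by automorphisms $e_k$ of $(\Delta,c_1,\dots,c_n)$ (which fix the constants), and use the finiteness of the set of complete behaviors to stabilize along a subsequence. All of that matches the paper's proof.

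The gap is in the final extraction step. You claim that a limit function exists ``in the closure of $\{f\circ e_k : k\}$,'' but the space $D^D$ is not compact, and the sequence $f\circ e_k$ need not have any pointwise convergent subsequence: for a fixed $a\in F_i$ the values $f(e_k(a))$ can run off through infinitely many elements of $D$ as $k\to\infty$ (think of $(\mathbb{Q};<)$ with the $e_k$ pushing $F_i$ further and further out). Having only finitely many \emph{behaviors} on each $F_i$ does not make the tree of \emph{restrictions} $(f\circ e_k)|_{F_i}$ finitely branching, so K\"onig's lemma does not apply as stated. The missing ingredient is to also compose on the \emph{left}: since $f\circ e_{k+1}$ and $f\circ e_k$ realize the same behavior on $F_k$ (after your thinning), the tuples $f(e_{k+1}(a))$ and $f(e_k(a))$, for $a$ enumerating $F_k$, have the same type in $\Delta$, so by homogeneity of $\Delta$ one can inductively choose automorphisms $\beta_k$ of $\Delta$ (taking $\beta_1=\mathrm{id}$, so that agreement with $f$ on the constants is preserved) with $\beta_{k+1}\circ f\circ e_{k+1}$ agreeing with $\beta_k\circ f\circ e_k$ on $F_k$. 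The union of these literally coherent partial functions is then a well-defined canonical function, and it lies in the closed monoid generated by $\{f\}\cup\Aut(\Delta)$ because that monoid is closed under left composition with automorphisms as well. This left-alignment is exactly what the paper's proof does and what your write-up omits.
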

\begin{proof}
% The proof is identical with the first proof of Lemma~\ref{lem:generatesCanonical}, with $\Delta$ replaced by $(\Delta,c_1,\ldots,c_n)$.
    Let $(F_i)_{i\in \omega}$ be an increasing sequence of finite substructures of $(\Delta,c_1,\ldots,c_n)$ such that $\bigcup_{i\in\omega} F_i=(\Delta,c_1,\ldots,c_n)$. By Lemma~\ref{lem:canonicalOnArbitrarilyLargeFinite}, for each $i\in\omega$ we find a copy $F_i'$ of $F_i$ in $(\Delta,c_1,\ldots,c_n)$ on which $f$ is canonical. 
    By the homogeneity of $(\Delta,c_1,\ldots,c_n)$, there exist automorphisms $\alpha_i$ of $(\Delta,c_1,\ldots,c_n)$ sending $F_i$ to $F_i'$, for all $i\in\omega$. 
    Since there are only finitely
type conditions for $n((\Delta,c_1,\ldots,c_n))$-tuples, we may assume that if $f$ satisfies a type condition on $F_i'$, then it satisfies the same type condition on $F_{i+1}$.
    Then we can inductively pick automorphisms $\beta_i$ of 
    $(\Delta,c_1,\ldots,c_n)$ such that $\beta_{i+1}\circ f\circ\alpha_{i+1}$ agrees with $\beta_i\circ f\circ\alpha_i$ on $F_i$, for all $i\in\omega$. 
    The union over the functions $\beta_i\circ f\circ \alpha_i \colon F_i\To\Delta$ is a canonical function from $(\Delta,c_1,\ldots,c_n)$ to $\Delta$.
\end{proof}

The set of all closed transformation monoids on a fixed domain $D$ forms a complete lattice with respect to inclusion; it is the lattice of all endomorphism monoids of structures with domain $D$. Lemma~\ref{lem:generatesCanonicalWithConstants} has the following interesting consequence for this lattice.

\begin{defn}
    Let $\N, \M$ be closed monoids over the same domain. We say that $\N$ is \emph{minimal above $\M$} iff $\M\subsetneq \N$ and $\M\subsetneq \R\subseteq \N$ implies $\R=\N$ for all closed monoids $\R$.
\end{defn}

 Clearly, every minimal monoid above $\M$ is generated by a single function together with $\M$; such functions are called \emph{minimal} as well (cf.~\cite{RandomMinOps}).

\begin{lem}\label{lem:minimalMonoidGeneratedByCanonical}
    Let $\Theta$ be a structure with a finite relational signature which is a reduct of an ordered homogeneous Ramsey structure $\Delta$ in a finite relational signature, and let $\N$ be a minimal closed monoid above $\End(\Theta)$. Then there exist constants $c_1,\ldots,c_{n(\Theta)}\in \Delta$ and a function $f$ which is canonical as a function from $(\Delta,c_1,\ldots,c_{n(\Theta)})$ to $\Delta$ such that $\N$ is generated by $\End(\Theta)$ and $f$.
\end{lem}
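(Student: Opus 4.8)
The plan is to combine the minimality of $\N$ with the canonisation Lemma~\ref{lem:generatesCanonicalWithConstants}. Since $\N$ is minimal above $\End(\Theta)$, it is generated by $\End(\Theta)$ together with a single minimal function $f$; that is, $\N$ is the smallest closed monoid containing $\End(\Theta)\cup\{f\}$. The goal is to replace $f$ by a canonical function without escaping $\N$ and without falling back into $\End(\Theta)$.

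First I would observe that since $\Theta$ is a reduct of $\Delta$, we have $\Aut(\Delta)\subseteq\Aut(\Theta)\subseteq\End(\Theta)$, so ``generating over $\Delta$'' (as in Definition~\ref{defn:generatesUnary}) is at least as strong as generating inside $\N$: any function that $f$ generates over $\Delta$ already lies in $\N$, because $\N$ contains $f$ and all of $\Aut(\Delta)$ and is a closed monoid. Next I would choose the constants. Since $f\notin\End(\Theta)$, the function $f$ violates some relation of $\Theta$, i.e.\ there is a relation $R$ of $\Theta$ of arity $k\le n(\Theta)$ and tuples $r_1,\dots,r_n\in R$ whose componentwise image under $f$ is not in $R$. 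Collecting the finitely many coordinates appearing in these tuples yields constants $c_1,\dots,c_m\in\Delta$ with $m\le n(\Theta)$ witnessing that $f$ violates $R$; padding with arbitrary further constants if necessary, I take exactly $n(\Theta)$ of them.

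Now apply Lemma~\ref{lem:generatesCanonicalWithConstants} to $f$ and $c_1,\dots,c_{n(\Theta)}$: it produces a function $g$ which $f$ generates over $\Delta$, which agrees with $f$ on $\{c_1,\dots,c_{n(\Theta)}\}$, and which is canonical as a function from $(\Delta,c_1,\dots,c_{n(\Theta)})$ to $\Delta$. Because $g$ is generated by $f$ over $\Delta$ and $\N$ is a closed monoid containing both $f$ and $\Aut(\Delta)$, we get $g\in\N$, whence $\End(\Theta)\cup\{g\}$ generates a closed monoid $\R$ with $\End(\Theta)\subseteq\R\subseteq\N$. It remains to argue $\R=\N$, which by minimality only requires showing $\R\neq\End(\Theta)$, i.e.\ that $g\notin\End(\Theta)$. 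This is where the choice of constants pays off: $g$ agrees with $f$ on exactly the coordinates $c_1,\dots,c_{n(\Theta)}$ of the tuples witnessing that $f$ violates $R$, so $g$ sends those same tuples outside $R$ and hence violates $R$ as well. Therefore $g\notin\End(\Theta)$, so $\R$ strictly contains $\End(\Theta)$, and minimality forces $\R=\N$. Taking $f:=g$ (renamed) gives the desired canonical generator, completing the proof.

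The main obstacle I anticipate is the bookkeeping around the constants: one must verify that the number of coordinates needed to witness the violation is genuinely bounded by $n(\Theta)$ (so that the statement's bound $c_1,\dots,c_{n(\Theta)}$ is honest) and that ``agreeing with $f$ on the constants'' is exactly the right amount of information to transfer the violation of $R$ from $f$ to $g$. Everything else is a clean application of the canonisation lemma together with the lattice-theoretic definition of minimality; the only subtlety is making sure the generated function stays inside $\N$, which follows from $\Aut(\Delta)\subseteq\End(\Theta)\subseteq\N$ and closure of $\N$ under the generation operation.
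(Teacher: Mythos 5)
Your proposal is correct and follows essentially the same route as the paper: pick a generator of $\N$ outside $\End(\Theta)$, extract the tuple witnessing that it violates a relation of $\Theta$, canonise relative to those constants via Lemma~\ref{lem:generatesCanonicalWithConstants}, and use the preserved violation plus minimality to conclude. The extra care you take in checking that the canonical function stays in $\N$ (via $\Aut(\Delta)\subseteq\End(\Theta)\subseteq\N$ and closedness) is left implicit in the paper but is exactly the right justification.
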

\begin{proof}
    Pick any $g\in \N\setminus\End(\Theta)$. Since $g\nin\End(\Theta)$, there exist a relation $R$ of $\Theta$ and a tuple $c:=(c_1,\ldots,c_{n(\Theta)})$ such that $R$ is violated on this tuple. By Lemma~\ref{lem:generatesCanonicalWithConstants}, $g$ generates a function $f$ over $\Delta$ which is canonical as a function from $(\Delta,c_1,\ldots,c_{n(\Theta)})$ to $\Delta$ and which is identical with $g$ on $\{c_1,\ldots,c_{n(\Theta)}\}$. Then $f$ and $\End(\Theta)$ generate $\N$.
\end{proof}

\begin{prop}\label{prop:finiteMinimalReducts}
    Let $\Theta$ be a finite relational signature reduct of an ordered homogeneous finite relational signature Ramsey structure $\Delta$. Then there are finitely many minimal closed monoids above $\End(\Theta)$, and every closed monoid containing $\End(\Theta)$
    contains a minimal one. 
\end{prop}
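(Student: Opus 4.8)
The plan is to combine the canonisation lemma (Lemma~\ref{lem:minimalMonoidGeneratedByCanonical}) with the finiteness of the available canonical behaviors to produce only finitely many candidates for minimal monoids above $\End(\Theta)$, and then to argue abstractly that the lattice of closed monoids containing $\End(\Theta)$ must therefore have the desired minimality property. First I would observe that, by Lemma~\ref{lem:minimalMonoidGeneratedByCanonical}, every minimal closed monoid $\N$ above $\End(\Theta)$ is of the form generated by $\End(\Theta)$ together with a single function $f$ which is canonical as a function from some $(\Delta,c_1,\ldots,c_{n(\Theta)})$ to $\Delta$. Crucially, the number of tuples $(c_1,\ldots,c_{n(\Theta)})$ matters only up to the orbit under $\Aut(\Delta)$, and by $\omega$-categoricity there are only finitely many such orbits; for each fixed choice of constants, the canonical functions from $(\Delta,c_1,\ldots,c_{n(\Theta)})$ to $\Delta$ are determined by their complete behavior, of which there are only finitely many (as explained in the discussion following Lemma~\ref{lem:canonicalOnArbitrarilyLargeFinite}). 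Hence there are only finitely many closed monoids that can possibly arise as monoids generated by $\End(\Theta)$ together with such a canonical $f$, and a fortiori only finitely many distinct minimal monoids above $\End(\Theta)$. This establishes the first assertion.

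For the second assertion I would argue as follows. Let $\R$ be any closed monoid with $\End(\Theta)\subsetneq\R$, and pick some $g\in\R\setminus\End(\Theta)$. Since $g$ violates some relation $R$ of $\Theta$ on some tuple $c=(c_1,\ldots,c_{n(\Theta)})$, Lemma~\ref{lem:generatesCanonicalWithConstants} produces a canonical function $f$, generated by $g$ over $\Delta$, agreeing with $g$ on the constants; in particular $f\in\R$ and $f\notin\End(\Theta)$ (as it still violates $R$ on $c$). Thus the closed monoid $\M_f$ generated by $\End(\Theta)$ and $f$ is a monoid strictly between $\End(\Theta)$ and $\R$ (inclusive of $\R$) coming from the finite list above. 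Among all such finitely many canonical monoids lying inside $\R$ and strictly above $\End(\Theta)$, I would choose one, say $\M_0$, that is inclusion-minimal in this finite family. The remaining task is to verify that $\M_0$ is in fact minimal above $\End(\Theta)$ in the full lattice, i.e., that there is no closed monoid strictly between $\End(\Theta)$ and $\M_0$.

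The key step making this work is that any closed monoid $\R'$ with $\End(\Theta)\subsetneq\R'\subseteq\M_0$ again contains, by the same argument, a canonical function $f'$ outside $\End(\Theta)$, so the canonical monoid $\M_{f'}$ it generates together with $\End(\Theta)$ lies in our finite family, satisfies $\End(\Theta)\subsetneq\M_{f'}\subseteq\R'\subseteq\M_0$, and by minimality of $\M_0$ in the finite family we must have $\M_{f'}=\M_0$, forcing $\R'=\M_0$. This shows $\M_0$ is genuinely minimal above $\End(\Theta)$, and since $\M_0\subseteq\R$ we conclude that every closed monoid properly containing $\End(\Theta)$ contains a minimal one.

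I expect the main obstacle to be the bookkeeping in the second part: one must be careful that the finite family of ``canonical monoids'' is closed under the operation of passing to a smaller canonical monoid inside a given $\R'$, so that inclusion-minimality within the finite family really transfers to minimality in the full lattice. The essential ideas are all supplied by the preceding lemmas — Lemma~\ref{lem:minimalMonoidGeneratedByCanonical} reduces generators to canonical functions, and $\omega$-categoricity together with quantifier elimination supplies the finiteness — so the proof is largely a matter of combining these finiteness statements with the minimality argument correctly.
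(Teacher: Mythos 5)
Your proposal is correct and takes essentially the same approach as the paper: every minimal closed monoid above $\End(\Theta)$ is generated together with $\End(\Theta)$ by a function canonical over an expansion of $\Delta$ by a constant tuple (Lemma~\ref{lem:minimalMonoidGeneratedByCanonical}), and finiteness follows by counting the finitely many types of constant tuples and the finitely many complete behaviors; your explicit descent argument that inclusion-minimality within this finite family of ``canonical monoids'' transfers to minimality in the full lattice is sound, and in fact spells out the second assertion in more detail than the paper, which leaves it implicit. The one imprecision is the claim that canonical functions are ``determined by their complete behavior'' --- they are not (many functions share a behavior); what you actually need, and what the paper's proof states explicitly, is that two canonical functions over constant tuples of the same type with identical behaviors generate one another over $\Delta$, so that the closed monoid they generate with $\End(\Theta)$ depends only on the type of the constants and the behavior.
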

\begin{proof}
    Observe that if $c,d$ are tuples of the same type in $\Delta$, and $f, g$ are canonical functions from $(\Delta,c)$ and $(\Delta,d)$ to $\Delta$, respectively, and their (complete) behaviors are identical, then $f$ and $g$ generate one another over $\Delta$. Thus, there are only finitely many inequivalent (in the sense of `do not generate one another') functions generating minimal monoids. The upper bound for minimal monoids is the following: set $j:=o^\Delta(n(\Theta))$ (there are that many inequivalent choices for the tuple of constants of length $n(\Theta)$ in $\Delta$). For every type of an $n(\Theta)$-tuple $c$ in $\Delta$, set $r_c:=o^{(\Delta,c)}(n(\Delta))$. Set $r$ to be the maximum of the $r_c$. Define moreover $s:=o^{\Delta}(n(\Delta))$. Then a bound for the number of inequivalent minimal functions over $\End(\Theta)$ is $j\cdot s^r$.
\end{proof}

\subsection{Minimal higher arity functions}
Since primitive positive definability is characterized by finitary functions rather than unary functions (recall Theorem~\ref{thm:preservation}), we have to generalize our method to higher arities.

\begin{defn}
    Let $\Xi_1,\ldots,\Xi_m$ be a structures. For a tuple $x$ in the product $\Xi_1\mult\cdots\mult\Xi_m$ and $1\leq i\leq m$, we write
    $x_i$ for the $i$-th coordinate of $x$. The \emph{type} of a sequence of tuples $a^1,\ldots,a^n\in \Xi_1\mult\cdots\mult\Xi_m$, denoted by $\typ(a^1,\ldots,a^n)$, is the $m$-tuple containing the types
    of $(a^1_i,\ldots,a^n_i)$ in $\Xi_i$ for each $1\leq i\leq m$.
\end{defn}

With this definition, also the notions of \emph{type condition}, \emph{behavior}, \emph{($n$-)complete behavior}, and \emph{($n$-)canonical} generalize in complete analogy from functions $f \colon \Xi\To\Omega$, where $\Xi$ is a ``normal'' structure, to functions $f \colon \Xi_1\mult\cdots\mult\Xi_m\To\Omega$ whose domain is a product. It is folklore that the Ramsey property is not lost when going to products; for the reader's convenience, we provide a proof here.

\begin{lem}[The ordered Ramsey product lemma]
\label{lem:ORPL}
    Let $\Xi_1,\ldots,\Xi_m$ be ordered and Ramsey, and set $\Xi:=\Xi_1\mult\cdots\mult\Xi_m$. Let moreover a number $k\geq 1$, an $n$-tuple $(a^1,\ldots,a^n) \in\Xi$, and finite $F_i\subseteq\Xi_i$ be given. Then  there exist finite $S_i\subseteq\Xi_i$ with the property that whenever the $n$-tuples in $S:=S_1\mult\cdots\mult S_m$ of type $\typ(a^1,\ldots,a^n)$ are colored with $k$ colors, then there is a copy of $F:=F_1\mult\cdots\mult F_m$ in $S$ on which the coloring is constant.
\end{lem}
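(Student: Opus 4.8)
The plan is to prove the statement by induction on the number of factors $m$, peeling off the first coordinate and using the ordinary Ramsey property of a single factor together with a ``color blow-up'' that decouples that coordinate from the rest. Throughout I will use the observation already exploited in the proof of Lemma~\ref{lem:canonicalOnArbitrarilyLargeFinite}: since each $\Xi_i$ is ordered, coloring the $n$-tuples of a fixed type $t_i$ in a finite $S_i \subseteq \Xi_i$ is literally the same as coloring the copies of $P_i$ in $S_i$, where $P_i$ is the (rigid, hence with a unique enumeration) finite structure carried by any tuple of type $t_i$. Hence the single-factor Ramsey property of $\Xi_i$ can be applied directly to colorings of type-$t_i$ tuples. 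I also record the bookkeeping fact that, writing $t = \typ(a^1,\dots,a^n) = (t_1,\dots,t_m)$, an $n$-tuple of type $t$ in a product $S_1 \times \cdots \times S_m$ is the same datum as an $m$-tuple whose $i$-th entry is an $n$-tuple of type $t_i$ in $S_i$; this is immediate from the coordinatewise definition of the type of a sequence of tuples in a product.

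For $m = 1$ the lemma is exactly the Ramsey property of $\Xi_1$ (with $H = F_1$, $P = P_1$, and $k$ colors), read through the order observation. For the inductive step I would group the product as $\Xi_1 \times \Xi''$ with $\Xi'' := \Xi_2 \times \cdots \times \Xi_m$, so that a type-$t$ tuple in $S_1 \times S''$ corresponds to a pair $(u,w)$ with $u$ of type $t_1$ in $S_1$ and $w$ of type $(t_2,\dots,t_m)$ in $S''$. First apply the induction hypothesis to $\Xi_2,\dots,\Xi_m$ with $k$ colors, the type $(t_2,\dots,t_m)$, and the targets $F_2,\dots,F_m$, obtaining finite $S_2,\dots,S_m$ with the product-Ramsey property for $S'' := S_2 \times \cdots \times S_m$. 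Let $q$ be the (finite) number of $n$-tuples of type $(t_2,\dots,t_m)$ in $S''$, and then apply the Ramsey property of $\Xi_1$ with $k^q$ colors to obtain $S_1 \subseteq \Xi_1$ with $S_1 \to (F_1)^{P_1}_{k^q}$.

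Now, given any $k$-coloring $\chi$ of the type-$t$ tuples of $S_1 \times S''$, I regard it via the correspondence above as a function $\chi(u,w)$. Define a coloring $\Psi$ of the type-$t_1$ tuples $u$ of $S_1$ by letting $\Psi(u)$ be the entire ``row'' function $w \mapsto \chi(u,w)$; since there are $q$ possible arguments $w$ and $k$ possible values, $\Psi$ uses at most $k^q$ colors. By the choice of $S_1$ there is a copy $F_1'$ of $F_1$ in $S_1$ on which $\Psi$ is constant, i.e.\ $\chi(u,w)$ no longer depends on $u$ as $u$ ranges over the type-$t_1$ tuples of $F_1'$. This collapses $\chi$ to a genuine $k$-coloring $w \mapsto \chi'(w)$ of the type-$(t_2,\dots,t_m)$ tuples of $S''$; applying the product-Ramsey property of $S''$ furnished by the induction hypothesis yields a copy $F_2' \times \cdots \times F_m'$ of $F_2 \times \cdots \times F_m$ in $S''$ on which $\chi'$ is constant. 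Then $F_1' \times F_2' \times \cdots \times F_m'$ is a copy of $F$ in $S := S_1 \times \cdots \times S_m$ on which $\chi$ is constant, completing the induction.

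The only real idea, and the step I expect to be the main obstacle, is the decoupling of the first coordinate from the rest: the coloring $\chi$ mixes all coordinates, so one cannot treat coordinate $1$ and the block $2,\dots,m$ independently. This is overcome by the color blow-up, encoding each whole row $w \mapsto \chi(u,w)$ as a single color out of $k^q$ and then applying the one-factor Ramsey property in $\Xi_1$; the price is an exponential blow-up in the number of colors, which is harmless because $q$ is finite. I would also flag, as in the remark following Lemma~\ref{lem:canonicalOnArbitrarilyLargeFinite}, that orderedness of the factors is essential here: it is exactly what makes copies of $P_i$ coincide with type-$t_i$ tuples, so that the abstract Ramsey property can be fed colorings of tuples.
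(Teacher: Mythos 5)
Your proof is correct and follows essentially the same route as the paper's: induction on the number of factors, peeling off one coordinate and reducing to the single-factor Ramsey property via the observation that, by orderedness, coloring tuples of a fixed type is the same as coloring copies of the structure they induce. The only difference is in how the peeled-off coordinate is stabilized: you apply its Ramsey property once with $k^q$ colors (encoding each row $w\mapsto\chi(u,w)$ as a single color), whereas the paper instead iterates the Ramsey property $p$ times with $k$ colors, thinning out the remaining factor once for each tuple of the relevant type in the other block; both devices are standard and equally valid here.
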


\begin{proof}
    We use induction over $m$. The induction beginning $m=1$ is trivial, so assume $m>1$ and that the lemma holds for $m-1$.
    For all $1\leq i\leq n$, set $c^i:=(a^i_1,\ldots,a^i_{m-1})$. By the induction hypothesis, there exist finite $S_i\subseteq\Xi_i$ for all $1\leq i\leq m-1$ such that whenever its $n$-tuples of type $\typ(c^1,\ldots,c^n)$ are colored with $k$ colors, then there is a copy of $F_1\mult\cdots\mult F_{m-1}$ in $S_1\mult\cdots\mult S_{m-1}$ on which the coloring is constant. Let $p$ be the number of $n$-tuples of this type in $S_1\mult\cdots\mult S_{m-1}$. Also by induction hypothesis, there exists a finite $S_{m,1}\subseteq\Xi_m$ with the property that whenever its $n$-tuples of type $\typ(a^1_m,\ldots,a^n_m)$ are colored with $k$ colors, then it contains a monochromatic copy of $F_m$. Further, there is a finite $S_{m,2}\subseteq \Xi_m$ with the property that whenever its subsets of this type are colored with $k$ colors, then it contains a monochromatic copy of $S_{m,1}$. Continue constructing finite substructures of $\Xi_m$ like that, arriving at $S_m:=S_{m,p}$.

    We claim that $S:=S_1\mult\cdots\mult S_m$ has the desired property. To see this, let a coloring $\chi$ of the $n$-tuples in $S$ of type $\typ(a^1,\ldots,a^n)$  be given. Let $b(1),\ldots,b(p)$ be an enumeration of all the $n$-tuples in $S_1\mult\cdots\mult S_{m-1}$ which have type $\typ(c^1,\ldots,c^n)$. For $1\leq i\leq p$ and $1\leq j\leq n$, we write $b(i)^j$ for the $j$-th component of $b(i)$ (note that this component is an $(m-1)$-tuple in $S_1\mult\cdots\mult S_{m-1}$). Now for all $1\leq i\leq p$, define a coloring $\chi^i$ of the $n$-tuples $t=(t^1,\ldots,t^n)$ in $S_m$ of type $\typ(a^1_m,\ldots,a^n_m)$ by setting $\chi^i(t):=\chi(b(i)^1*t^1,\ldots,b(i)^n*t^n)$, where $r*s$ denotes the concatenation of two tuples $r,s$. By thinning out $S_m$ $p$ times, we obtain a copy $F_m'$ of $F_m$ in $S_m$ on which each coloring $\chi^i$ is constant with color $q^i$. Now by that construction, all $n$-tuples $b(i)$ have been assigned a color $q^i$, the assignment thus being a coloring of all the $n$-tuples of type $\typ(c^1,\ldots,c^n)$ in $S_1\mult\cdots\mult S_{m-1}$. By the choice of that product, there is a copy $F'_1\mult\cdots\mult F'_{m-1}$ of $F_1\mult\cdots\mult F_{m-1}$ in $S_1\mult\cdots\mult S_{m-1}$ on which that coloring is constant, say with value $q$. But that means that if a tuple $(d^1,\ldots,d^n) \in F_1'\mult\cdots\mult F_m'$ has type $\typ(a^1,\ldots,a^n)$, then $\chi(d^1,\ldots,d^n)=q$, proving our statement.
\end{proof}

\ignore{
\begin{lem}[The ordered Ramsey product lemma]\label{lem:ORPL}
    Let $\C$ be a class of finite ordered structures which is Ramsey. Then for all $m\geq 1$ the class $\C^m:=\{F_1\mult\cdots\mult F_m\;|\; F_i\in \C\}$ has the Ramsey property as well.
\end{lem}
\begin{proof}
    Let $P=P_1\mult\cdots\mult P_m\in\C^m$ and $H=H_1\mult\cdots\mult H_m\in\C^m$ be given. We will show that there exists $S=S_1\mult\cdots\mult S_m\in\C^m$ with the property that whenever the copies of $P$ in $S$ are colored with $k$ colors, then there is a copy of $H$ in $S$ on which the coloring is constant.

    We use induction over $m$. The induction beginning $m=1$ is trivial, so assume $m>1$ and that the lemma holds for $m-1$.
    By induction hypothesis, there exist finite $S_1,\ldots,S_{m-1}\in\C$ such that whenever the copies of $P_1\mult\cdots\mult P_{m-1}$ in $S_1\mult\cdots\mult S_{m-1}$ are colored with $k$ colors, then there is a copy of $H_1\mult\cdots\mult H_{m-1}$ in $S_1\mult\cdots\mult S_{m-1}$ on which the coloring is constant. Let $p$ be the number of copies of $P_1\mult\cdots\mult P_{m-1}$ in $S_1\mult\cdots\mult S_{m-1}$. Also by induction hypothesis, there exists a finite $S_{m,1}\in\C$ with the property that whenever the copies of $P_m$ in $S_{m,1}$  are colored with $k$ colors, then it contains a monochromatic copy of $H_m$. Further, there is a finite $S_{m,2}\in\C$ with the property that whenever its $n$-tuples of this type are colored with $k$ colors, then it contains a monochromatic copy of $S_{m,1}$. Continue constructing structures in $\C$ like that, arriving at $S_m:=S_{m,p}$.

    We claim that $S:=S_1\mult\cdots\mult S_m$ has the desired property. To see this, let a coloring $\chi$ of the copies of $P$ in $S$ 
    with $k$ colors be given. Let $Q^1,\ldots,Q^p$ be an enumeration of all copies of $P_1\mult\cdots\mult P_{m-1}$ in $S_1\mult\cdots\mult S_{m-1}$. For $1\leq i\leq p$, define a coloring $\chi^i$ of the copies of $P_m$ in $S_m$ by setting $\chi^i(X):=\chi(Q^i\mult X)$. By thinning out $S_m$ $p$ times, we obtain a copy $H_m'$ of $H_m$ in $S_m$ on which each coloring $\chi^i$ is constant with color $q^i$. Now by that construction, all products $Q^i$ have been assigned a color $q^i$, the assignment thus being a coloring of the $Q^i$ -- in other words, a coloring of the copies of $P_1\mult\cdots\mult P_{m-1}$ in $S_1\mult\cdots\mult S_{m-1}$. By the choice of the latter product, there is a copy $H'_1\mult\cdots\mult H'_{m-1}$ of $H_1\mult\cdots\mult H_{m-1}$ in $S_1\mult\cdots\mult S_{m-1}$ on which that coloring is constant, say with value $q$. But that means that $\chi$ is constant on all copies of $P$ in $H'=H'_1\mult\cdots\mult H'_{m}$, proving our statement.
\end{proof}
}

We now generalize the notion of a transformation monoid to higher arities. Denote the set of all polymorphisms of $\Delta$ by $\Pol(\Delta)$. Irrespectively of the structure $\Delta$, this set contains all finitary projections and is closed under composition. Sets of finitary functions with these two properties are referred to as \emph{clones} -- for a survey of clones on infinite sets, see~\cite{GoldsternPinsker}. In addition, the clone $\Pol(\Delta)$ is a closed subset of the sum space of the spaces $D^{D^n}$, where $D$ is again taken to be discrete; such clones are called \emph{closed}, \emph{local}, or \emph{locally closed} (cf. the corresponding terminology for monoids before). This means that if a set $\F$ of finitary functions on a domain $D$ preserves a set of given relations, then so does the smallest closed clone containing $\F$, motivating the following extension of Definition~\ref{defn:generatesUnary}.

\begin{defn}\label{defn:generatesFinitary}
    Let $D$ be a set, $g: D^m\To D$, and let $\F$ be a set of finitary operations on $D$. We say that $\F$ \emph{generates} $g$ iff $g$ is contained in the smallest closed clone containing $\F$. For a structure $\Delta$ with domain $D$ and a function $f \colon D^n\To D$, we say that \emph{$f$ generates $g$
    over $\Delta$} iff $\{f\}\cup\Aut(\Delta)$ generates $g$. Equivalently, for every finite subset $F$ of $\Delta^m$, there exists an $m$-ary term built from $f$, $\Aut(\Delta)$, and projections, which agrees with $g$ on $F$.
\end{defn}

As before, finitary functions on ordered homogeneous Ramsey structures generate canonical functions, and we can add constants to the language.

\begin{lem}\label{lem:canonicalConstantsHigherArity}
    Let $\Delta$ be ordered homogeneous Ramsey with finite relational signature, and let $f \colon \Delta^m\To \Delta$. Let moreover finite tuples $c_1=(c_1^1,\ldots,c_1^{n_1}),\ldots,c_m=(c_m^1,\ldots,c_m^{n_m})$ of constants in $\Delta$ be given. Then $f$ generates over $\Delta$ an $m$-ary operation $g$ on $\Delta$ which is canonical as a function from $(\Delta,c_1)\mult\cdots\mult(\Delta,c_m)$ to $\Delta$ and which agrees with $f$ on all tuples $(c_1^{j_1},\ldots,c_m^{j_m})$.
\end{lem}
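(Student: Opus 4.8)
The plan is to mimic the proof of Lemma~\ref{lem:generatesCanonicalWithConstants}, with the single ordered homogeneous Ramsey structure replaced by the product $\Xi:=(\Delta,c_1)\mult\cdots\mult(\Delta,c_m)$, and with the ordered Ramsey product lemma in the role played there by plain Ramsey. First I would observe, using Proposition~\ref{prop:addingConstantsPreservesRamsey}, that each factor $(\Delta,c_j)$ is ordered homogeneous Ramsey; note in particular that every automorphism of $(\Delta,c_j)$ fixes each of the constants $c_j^1,\ldots,c_j^{n_j}$. Since all factors are ordered and Ramsey, Lemma~\ref{lem:ORPL} applies to $\Xi$.

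The first real step is the product analogue of Lemma~\ref{lem:canonicalOnArbitrarilyLargeFinite}: for all finite $F_j\subseteq(\Delta,c_j)$ there are copies $F_j'$ of $F_j$ inside $(\Delta,c_j)$ such that $f$ is canonical, as a function to $\Delta$, on $F_1'\mult\cdots\mult F_m'$. I would prove this exactly as Lemma~\ref{lem:canonicalOnArbitrarilyLargeFinite} is proved from ordinary Ramsey: read $f$ as a colouring of the tuples of $\Xi$ by the type of their image in $\Delta$ (finitely many colours, as $\Delta$ is $\omega$-categorical), recall that by quantifier elimination it suffices to control the finitely many product-types (i.e.\ $m$-tuples of component types) of tuples of some fixed length $N$ large enough to determine all types in each factor, and iterate Lemma~\ref{lem:ORPL} once per such product-type, each application thinning $\Xi$ to a monochromatic product-copy of the structure obtained in the previous round.

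Next I would run the diagonalisation. Fix increasing finite $F_j^i\subseteq(\Delta,c_j)$ with union $(\Delta,c_j)$, arranging that $F_j^0$ contains $c_j^1,\ldots,c_j^{n_j}$ and that $F^0:=F_1^0\mult\cdots\mult F_m^0$ already realises every product-type of length $N$; then $F^i:=F_1^i\mult\cdots\mult F_m^i$ exhausts $\Xi$ and $F^0$ contains every constant tuple $(c_1^{j_1},\ldots,c_m^{j_m})$. For each $i$ the previous step together with the homogeneity of the factors yields automorphisms $\alpha_i^j\in\Aut((\Delta,c_j))$ making $f$ canonical on $\alpha_i^1(F_1^i)\mult\cdots\mult\alpha_i^m(F_m^i)$; since $F^0$ realises all product-types, the behaviour witnessed there is a \emph{complete} behaviour, and as there are only finitely many complete behaviours I may pass to a subsequence along which this behaviour is a fixed $B$. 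Writing $\alpha_i:=\alpha_i^1\mult\cdots\mult\alpha_i^m$, I would then choose $\gamma_i\in\Aut(\Delta)$ inductively with $\gamma_0=\mathrm{id}$ so that $\gamma_{i+1}\circ f\circ\alpha_{i+1}$ agrees with $\gamma_i\circ f\circ\alpha_i$ on $F^i$: because $\alpha_i$ and $\alpha_{i+1}$ preserve product-types and $f$ has the same behaviour $B$ on both copies, the tuples $(f(\alpha_i(x)))_{x\in F^i}$ and $(f(\alpha_{i+1}(x)))_{x\in F^i}$ have equal type in $\Delta$, so homogeneity of $\Delta$ supplies the required $\gamma_{i+1}$. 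The function $g$ given by $g:=\gamma_i\circ f\circ\alpha_i$ on $F^i$ is then well defined.

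Finally I would verify the three conclusions, all of which are then routine. Canonicity of $g\colon\Xi\To\Delta$ holds because for $a$ in any $F^i$ the type of $g(a)$ equals that of $f(\alpha_i(a))$ (as $\gamma_i\in\Aut(\Delta)$), which by $B$ depends only on the product-type of $a$. That $f$ generates $g$ over $\Delta$ is immediate from Definition~\ref{defn:generatesFinitary}, since on each $F^i$ the map $g$ equals the $m$-ary term $\gamma_i\circ f\circ(\alpha_i^1,\ldots,\alpha_i^m)$ built from $f$, automorphisms of $\Delta$, and projections. And $g$ agrees with $f$ on the constant tuples, since $\gamma_0=\mathrm{id}$ and the fact that each $\alpha_0^j$ fixes the constants give $g(c_1^{j_1},\ldots,c_m^{j_m})=f(c_1^{j_1},\ldots,c_m^{j_m})$ already on $F^0$, a value frozen by the agreement of all the pieces on $F^0$. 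The only genuinely new ingredient over the unary Lemma~\ref{lem:generatesCanonicalWithConstants} is the product Ramsey input, so the main point requiring care is the product-type bookkeeping: proving the product analogue of Lemma~\ref{lem:canonicalOnArbitrarilyLargeFinite} and checking that the factor automorphisms $\alpha_i^j$ preserve product-types throughout the diagonalisation.
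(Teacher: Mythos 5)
Your proof is correct and follows exactly the route the paper intends: the paper's own ``proof'' of this lemma is a one-line instruction to combine Lemma~\ref{lem:ORPL} with the methods of the preceding section (i.e.\ the canonisation-on-finite-copies argument of Lemma~\ref{lem:canonicalOnArbitrarilyLargeFinite} and the diagonalisation of Lemma~\ref{lem:generatesCanonicalWithConstants}), which is precisely the product-Ramsey canonisation plus diagonalisation you carry out. Your explicit choices --- $\gamma_0=\mathrm{id}$ together with the fact that the $\alpha_0^j$ fix the constants, and making $F^0$ realise all product-types so that the behaviour stabilises by pigeonhole --- correctly supply details the paper leaves implicit.
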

\begin{proof}
    We recommend combining Lemma~\ref{lem:ORPL} with the methods of the preceding section in order to prove this.
\end{proof}

The set of all closed clones on a fixed domain $D$ forms a complete lattice with respect to inclusion; it is the lattice of all polymorphism clones of structures with domain $D$. This lattice has been investigated in universal algebra (see~\cite{Pin-morelocal}).

\begin{defn}\label{defn:minimalClone}
    For closed clones $\C, \D$ on the same set, we say that $\D$ is \emph{minimal} above $\C$ iff $\C\subsetneq \D$ and $\C\subsetneq \E\subseteq \D$ implies $\E=\D$ for all closed clones $\E$. Every minimal closed clone above $\C$ is generated by $\C$ plus a single function $f$ outside $\C$; we call such a function $f$ \emph{minimal} above $\C$ if there is no function of smaller arity which generates (together with $\C$) the same closed clone as $f$.
\end{defn}

Lemma~\ref{lem:canonicalConstantsHigherArity} allows us to find the minimal clones above a closed clone on an ordered  homogeneous Ramsey structure. The main difference here compared with monoids is that the arities of minimal canonical functions are not bounded a priori, which means that there could be infinitely many minimal clones. The following lemma, which has been observed in~\cite{tcsps-journal}, yields a bound on the arities of minimal functions.

\begin{lem}\label{lem:arityReduction}
    Let $\Theta$ be a structure, $m \geq 1$, and let $R\subseteq \Theta^n$ be a relation which intersects precisely $m$ $n$-orbits of $\Theta$. If a function
    $f \colon \Theta^p \To \Theta$ violates $R$, then $f$ generates over $\Theta$ a function of arity $m$ which violates $R$, too.
\end{lem}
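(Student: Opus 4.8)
The plan is to extract from the hypothesis a finite set of tuples witnessing that $f$ violates $R$, to observe that these tuples occupy at most $m$ orbits, and then to assemble the desired $m$-ary function by routing each of the $p$ argument slots of $f$ through a projection onto one of $m$ coordinates followed by a correcting automorphism. No deep machinery is needed; the content is a careful composition in the clone generated by $f$ and $\Aut(\Theta)$.

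First I would unpack the assumption that $f\colon\Theta^p\To\Theta$ violates $R$: there exist $n$-tuples $r_1,\ldots,r_p\in R$ such that $f(r_1,\ldots,r_p)\nin R$, where $f$ is applied componentwise. Let $O_1,\ldots,O_m$ be the $n$-orbits of $\Theta$ that $R$ meets, and fix a representative $s_k\in R\cap O_k$ for each $k\leq m$. Since every $r_i$ lies in $R$, it lies in some $O_{k(i)}$ with $k(i)\in\{1,\ldots,m\}$; because $r_i$ and $s_{k(i)}$ belong to the same orbit, I may choose an automorphism $\gamma_i\in\Aut(\Theta)$ with $\gamma_i(s_{k(i)})=r_i$ (componentwise). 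This is the only place where the orbit hypothesis enters, and it is exactly what reduces the $p$ witnesses to the $m$ fixed representatives $s_1,\ldots,s_m$.

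Next I would define the $m$-ary operation
\[ g(x_1,\ldots,x_m) := f\bigl(\gamma_1(x_{k(1)}),\ldots,\gamma_p(x_{k(p)})\bigr). \]
As each $k(i)\in\{1,\ldots,m\}$, this is a function of the $m$ variables $x_1,\ldots,x_m$ (some possibly dummy), and it is an explicit term built from $f$, the automorphisms $\gamma_i\in\Aut(\Theta)$, and the $m$-ary projections $x\mapsto x_{k(i)}$. Hence by Definition~\ref{defn:generatesFinitary}, $g$ lies in the closed clone generated by $\{f\}\cup\Aut(\Theta)$, i.e.\ $f$ generates $g$ over $\Theta$. To verify that $g$ violates $R$, I would evaluate it on the tuples $s_1,\ldots,s_m\in R$: computing componentwise, the $i$-th argument of $f$ receives $\gamma_i(s_{k(i)})=r_i$, so that $g(s_1,\ldots,s_m)=f(r_1,\ldots,r_p)\nin R$. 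Since each $s_k\in R$, this exhibits $g$ as an $m$-ary function, generated by $f$ over $\Theta$, violating $R$, as required.

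The argument presents no genuine obstacle; the one point demanding care is the componentwise bookkeeping, namely checking that applying $g$ coordinatewise to $s_1,\ldots,s_m$ reproduces $f$ applied coordinatewise to $r_1,\ldots,r_p$, which follows immediately once one writes $r_i^j=\gamma_i(s_{k(i)}^j)$ for each coordinate $j\leq n$. Note also that $\omega$-categoricity is not used: the only property of orbits invoked is that tuples in a common orbit are related by an automorphism.
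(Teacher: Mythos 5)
Your proof is correct and follows essentially the same route as the paper's: fix representatives of the $m$ orbits meeting $R$, pull the witnessing tuples $r_1,\ldots,r_p$ back to these representatives via automorphisms, and precompose $f$ with the corresponding projections and automorphisms. If anything, your version is slightly more careful than the paper's in insisting that the representatives $s_k$ be chosen in $R\cap O_k$ (which is indeed needed for $g(s_1,\ldots,s_m)\nin R$ to witness violation).
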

\begin{proof}
Let $O_1,\ldots,O_m$ be the orbits of $\Theta$ that are intersect $R$, and fix arbitrary tuples $s_i\in O_i$. Since $f$ violates $R$, there exist $r_1,\ldots,r_p\in R$ such that $f(r_1,\ldots,r_p)\nin R$. Say that $b_i\in O_{j_i}$, for all $1\leq i\leq p$, and choose for all $1\leq i\leq p$ an automorphism $\alpha_i$ of $\Theta$ sending $s_{j_i}$ to $r_{i}$. The function $g(x_1,\ldots,x_m):=f(\alpha_{1}(x_{i_1}),\ldots,\alpha_p(x_{i_p}))$ has arity $m$ and violates $R$ since $g(s_1,\ldots,s_m)=f(r_1,\ldots,r_p)$ is not in $R$.
\end{proof}

%\begin{proof}
%    There exist $r_1,\ldots,r_p\in R$ such that $f(r_1,\ldots,r_p)\nin R$. There is nothing to show if $p\leq m$, so suppose this is not the case. Say wlog that for all $m+1\leq i\leq p$, $r_i$ is in the same orbit as $r_{j_i}$, where $j_i\in \{1,\ldots,m\}$. Fix automorphisms $\alpha_i$ of $\Theta$ sending $r_{j_i}$ to $r_{i}$, for all $m+1\leq i\leq p$. The function $g(x_1,\ldots,x_m,\alpha_{m+1}(x_{j_{m+1}}),\ldots,\alpha_{p}(x_{j_{p}}))$ has arity $m$ and violates $R$.
%\end{proof}

\begin{prop}\label{prop:finiteMinimalClones}
    Let $\Theta$ be a finite relational signature reduct of an ordered homogeneous Ramsey structure $\Delta$ with finite relational signature. Then there are finitely many minimal closed clones above $\Pol(\Theta)$, and every closed clone containing $\Pol(\Theta)$
    contains a minimal one. 
\end{prop}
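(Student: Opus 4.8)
The plan is to prove Proposition~\ref{prop:finiteMinimalClones} by combining the arity-reduction bound from Lemma~\ref{lem:arityReduction} with the canonization machinery of Lemma~\ref{lem:canonicalConstantsHigherArity}, in close analogy with the unary case treated in Proposition~\ref{prop:finiteMinimalReducts}. First I would observe that a minimal closed clone $\D$ above $\Pol(\Theta)$ is generated by $\Pol(\Theta)$ together with a single minimal function $f$; since $f\nin\Pol(\Theta)$, there is some relation $R$ of $\Theta$ which $f$ violates. Let $m$ be the number of $n(\Theta)$-orbits of $\Theta$ that $R$ intersects. By Lemma~\ref{lem:arityReduction}, $f$ generates over $\Theta$ (and hence over $\Delta$, since $\Aut(\Delta)\subseteq\End(\Theta)\subseteq\Pol(\Theta)$) a function $g$ of arity $m$ which still violates $R$. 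As $R$ is a relation of $\Theta$, the arity $m$ is bounded by a quantity depending only on $\Theta$ — namely the maximum over the relations $R$ of $\Theta$ of the number of orbits $R$ meets — so the arities of the functions we must consider are bounded a priori. This is precisely the step that resolves the difficulty flagged before Lemma~\ref{lem:arityReduction}.

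Next I would fix constant tuples witnessing the violation: because $g$ violates $R$, there are tuples $r_1,\ldots,r_m\in R$ with $g(r_1,\ldots,r_m)\nin R$, and I would collect the entries of these tuples into finite tuples of constants $c_1,\ldots,c_m$ in $\Delta$ (one tuple of constants for each argument position of $g$). Applying Lemma~\ref{lem:canonicalConstantsHigherArity} to $g$ with these constants yields an $m$-ary function $h$, generated by $g$ over $\Delta$, which is canonical as a function from $(\Delta,c_1)\mult\cdots\mult(\Delta,c_m)$ to $\Delta$ and which agrees with $g$ on all tuples built from the given constants. In particular $h$ agrees with $g$ on $(r_1,\ldots,r_m)$, so $h$ still violates $R$ and hence $h\nin\Pol(\Theta)$. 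Since $\D$ is minimal above $\Pol(\Theta)$ and $h$ lies in the clone generated by $\Pol(\Theta)$ and $f$, the function $h$ together with $\Pol(\Theta)$ generates exactly $\D$. Thus every minimal clone is generated by $\Pol(\Theta)$ together with a canonical function relative to finitely many constants, of bounded arity.

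It then remains to bound the number of such canonical functions, and this proceeds exactly as in Proposition~\ref{prop:finiteMinimalReducts}. As noted there, if two tuples of constants have the same type in the relevant product structure and the two canonical functions have the same complete behavior, then they generate one another over $\Delta$; so a minimal clone is determined up to generation-equivalence by the (finitely many) orbits of its tuples of constants together with the complete behavior, which by $\omega$-categoricity and quantifier elimination is a function between two finite sets of types. Since both the admissible arities $m$, the number of orbits of constant tuples of each length, and the number of complete behaviors are finite, there are only finitely many inequivalent minimal functions, hence finitely many minimal clones above $\Pol(\Theta)$. For the second assertion, given any closed clone $\C\supsetneq\Pol(\Theta)$, pick $g\in\C\setminus\Pol(\Theta)$; running the argument above produces a canonical function $h\in\C$ with $h\nin\Pol(\Theta)$ of bounded arity, and a standard descent among the finitely many generation-classes of such functions shows $\C$ contains a minimal clone above $\Pol(\Theta)$.

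I expect the main obstacle to be the bookkeeping in the arity-bounding step: one must verify that the orbit count $m$ supplied by Lemma~\ref{lem:arityReduction} genuinely depends only on $\Theta$ and not on the particular minimal function $f$, so that the arities of the canonical functions to be enumerated are uniformly bounded. Once this bound is in place, the counting argument is a routine adaptation of the monoid case, the only genuinely new ingredient being the passage through the product structure $(\Delta,c_1)\mult\cdots\mult(\Delta,c_m)$ furnished by Lemma~\ref{lem:canonicalConstantsHigherArity} and Lemma~\ref{lem:ORPL}.
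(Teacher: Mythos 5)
Your proposal is correct and follows essentially the same route as the paper: bound the arity via Lemma~\ref{lem:arityReduction} (the paper takes $m$ to be the maximum of $o^\Theta(k_i)$ over the arities $k_i$ of the relations of $\Theta$, which is the uniform bound you ask for), canonize via Lemma~\ref{lem:canonicalConstantsHigherArity}, and then count behaviors exactly as in Proposition~\ref{prop:finiteMinimalReducts}. The only quibble is your parenthetical ``generates over $\Theta$ and hence over $\Delta$'' --- the implication goes the other way since $\Aut(\Delta)\subseteq\Aut(\Theta)$, but this is harmless because $\Aut(\Theta)\subseteq\Pol(\Theta)$, so the reduced-arity function still lies in the clone generated by $f$ together with $\Pol(\Theta)$, which is all you use.
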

\begin{proof}
    Let $R_1,\ldots,R_n$ be the relations of $\Theta$. If $f$ is a minimal operation above $\Pol(\Theta)$, then it violates a relation $R_i$. By Lemma~\ref{lem:arityReduction}, it generates over $\Theta$ a function of arity $o^\Theta(k_i)$, where $k_i$ is the arity of $R_i$, which still violates $R_i$. Setting $m$ to be the maximum of the $o^\Theta(k_i)$ where $1\leq i\leq n$, we get that every minimal clone above $\Pol(\Theta)$ is generated by a function of arity at most $m$. By Lemma~\ref{lem:canonicalConstantsHigherArity}, such functions can be made canonical -- the rest of the proof is just like the proof of Proposition~\ref{prop:finiteMinimalReducts}.
\end{proof}

If one wishes to determine the minimal clones above the endomorphism monoid of a structure $\Theta$, then there is a bound on the arities of minimal functions which only depends of the number of $2$-orbits of the structure $\Theta$, rather than the number of orbits of possibly longer tuples as in the preceding proof. 

\begin{defn}
    Let $D$ be a set, and let $f \colon D^m \To D$ be an operation on $D$. Then $f$ is called \emph{essentially unary} iff there exist $1\leq i\leq m$ and $F \colon D\To D$ such that $f(x_1,\ldots,x_m)=F(x_i)$. Conversely, $f$ is called \emph{essential} iff it is not essentially unary.
\end{defn}

\begin{prop}\label{prop:finiteMinimalClonesAboveEnd}
    Let $\Theta$ be any relational structure for which $o^\Theta(2)$ is finite. Then every minimal closed clone above $\End(\Theta)$ is generated by a function of arity at most $2\cdot o^\Theta(2)-1$ together with $\End(\Theta)$.
\end{prop}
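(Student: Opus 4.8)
The plan is to work in the lattice of closed clones and exploit minimality. Write $m:=o^\Theta(2)$, and let $\mathcal{E}$ denote the closed clone generated by $\End(\Theta)$; concretely $\mathcal{E}$ consists exactly of the \emph{essentially unary} operations $F(x_i)$ with $F\in\End(\Theta)$, since $\End(\Theta)$ is a monoid containing $\id$ and every composition one can form with projections is of this shape. Let $\mathcal{N}$ be minimal above $\mathcal{E}$. By Definition~\ref{defn:minimalClone}, $\mathcal{N}$ is generated over $\mathcal{E}$ by a single operation; I first record the useful fact that \emph{every} operation $h\in\mathcal{N}\setminus\mathcal{E}$ generates $\mathcal{N}$. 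Indeed, the clone generated by $\mathcal{E}\cup\{h\}$ satisfies $\mathcal{E}\subsetneq\langle\mathcal{E}\cup\{h\}\rangle\subseteq\mathcal{N}$, so by minimality it equals $\mathcal{N}$. Hence it suffices to exhibit \emph{some} operation in $\mathcal{N}\setminus\mathcal{E}$ of arity at most $2m-1$.

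Let $f$ be a generator of $\mathcal{N}$ lying outside $\mathcal{E}$. If $f$ is essentially unary, say $f=F(x_i)$, then $F\notin\End(\Theta)$ (else $f\in\mathcal{E}$), and $F$ is a \emph{unary} member of $\mathcal{N}\setminus\mathcal{E}$; since $1\le 2m-1$, we are done. So assume $f$ is \emph{essential}, and, discarding dummy variables by identification (which only lowers the arity, is a composition with a projection, and preserves the at least two genuine dependencies), assume $f$ depends on all of its $n$ variables. The heart of the matter is the following reduction step: \emph{if $n\ge 2m$, then $f$ generates over $\End(\Theta)$ an essential operation of arity strictly less than $n$.} Granting this, iterating drives the arity down to at most $2m-1$ while staying inside $\mathcal{N}\setminus\mathcal{E}$ (an essential operation is never essentially unary, hence never in $\mathcal{E}$), which by the previous paragraph finishes the proof.

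To prove the reduction step I would argue by the dichotomy for identification minors. Either some identification of two variables of $f$ is again essential, in which case that minor has arity $n-1$, lies in $\mathcal{N}$, and is the operation sought; or every such minor is essentially unary. As $n\ge 2m\ge 4$ (note $m\ge 2$ once $\Theta$ has at least two elements), a classical variable-identification argument then forces $f$ to be a \emph{semiprojection}: there is a coordinate $k$ with $f(\bar x)=x_k$ for every non-injective tuple $\bar x$, while $f$ differs from the projection $x_k$ on some injective tuples. This is where the finiteness of the $2$-orbits enters. Classifying the coordinates $i\ne k$ by the $2$-orbit $\typ(a_k,a_i)$ of the relevant pairs of an injective witness $\bar a$ (and, more precisely, by the joint $2$-orbit data needed to control all projections at once), one locates two coordinates that can be \emph{merged}: feed one of them the image of the other under a suitable $\alpha\in\Aut(\Theta)\subseteq\End(\Theta)$. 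The resulting operation has arity $n-1$, is generated by $f$ together with $\End(\Theta)$, and — this is the crucial point — can be arranged to differ from \emph{every} projection on some injective tuple, hence to be essential.

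I expect the semiprojection case to be the main obstacle, and it is genuinely more subtle than a single pigeonhole step. Merging two coordinates of a semiprojection by plain identification produces the projection $x_k$ and destroys essentiality; re-injecting distinctness through an automorphism recovers an injective tuple on which $f$ is nontrivial, but deviating from $x_k$ at one tuple does not yet give essentiality — the merged operation could still coincide with another projection $x_\ell$. One must therefore secure a merge whose result differs simultaneously from \emph{all} projections, and guaranteeing this requires running the pigeonhole against the $m$ available $2$-orbits with enough coordinates to both keep the merged inputs injective and block degeneration into a competing projection. It is exactly this book-keeping that pins the threshold at $n\ge 2m$ and yields the bound $2\cdot o^\Theta(2)-1$. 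That the bound cannot be improved in general is witnessed by the majority operation, which is essential of arity $3=2\cdot 2-1$, is not a semiprojection, and admits no reduction; it falls under the residual $n=3$ branch of the dichotomy, which needs no further treatment since $3\le 2m-1$ whenever $m\ge 2$.
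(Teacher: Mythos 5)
Your framing is fine as far as it goes: the observation that every $h\in\mathcal N\setminus\mathcal E$ generates $\mathcal N$, and the reduction to exhibiting a single operation of arity at most $2m-1$ outside the essentially unary clone, are correct and match the first step of the paper's proof. But the core of your argument --- the reduction step for $n\ge 2m$ --- is not actually proved, and it is the whole content of the proposition. Two concrete problems. First, the ``classical variable-identification argument'' ({\'S}wierczkowski's lemma) concludes that $f$ is a semiprojection under the hypothesis that all identification minors are \emph{projections}; your minors are only \emph{essentially unary}, i.e.\ of the form $F(x_i)$ with $F\in\End(\Theta)$ possibly far from the identity and possibly non-injective. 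You would have to restate and reprove the lemma in this generality, and the resulting object satisfies $f(\bar x)=F(x_k)$ on non-injective tuples for some unary $F$, which changes the subsequent analysis. Second, and more seriously, the semiprojection merge --- locating two coordinates by a pigeonhole over the $m$ two-orbits and feeding one of them the $\alpha$-image of the other so that the result is essential --- is precisely the step you yourself describe as ``genuinely more subtle,'' and you never carry it out: there is no argument that the merged operation differs from \emph{every} projection, no specification of which joint $2$-orbit data the pigeonhole runs over, and no verification that $2m$ coordinates suffice. As written, the claimed threshold $n\ge 2m$ is an assertion, not a consequence of anything on the page.

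The paper bypasses all of this with a short relational argument that you may want to compare against. An essential operation must violate the ternary relation $P_3$ defined by $(x=y)\vee(y=z)$ (essentially unary operations preserve it, since endomorphisms preserve equality of components). Now count the orbits of triples inside $P_3$: the part $x=y$ consists of $o^\Theta(2)$ orbits, the part $y=z$ likewise, and their intersection is the single orbit of constant triples, giving $2\cdot o^\Theta(2)-1$ orbits in total. Lemma~\ref{lem:arityReduction} then converts ``violates a relation meeting $2\cdot o^\Theta(2)-1$ orbits'' directly into ``generates, over $\Theta$, an operation of arity $2\cdot o^\Theta(2)-1$ still violating $P_3$''; that operation is essential, hence lies outside $\mathcal E$, and minimality finishes the proof. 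I recommend either adopting this route or supplying the missing semiprojection analysis in full detail; in its current state the proposal has a genuine gap at its decisive step.
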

\begin{proof}
    Let $\D$ be a minimal closed clone above $\End(\Theta)$. If all the functions in $\D$ are essentially unary, then $\D$ is generated by a unary operation together with $\End(\Theta)$ and we are done. Otherwise, let $f$ be an essential operation in $\D$. Then one can verify that $f$ violates the $3$-ary relation $P_3$ defined by the formula $(x=y) \vee (y=z)$. The assertion then follows from Lemma~\ref{lem:arityReduction}: the $3$-ary subrelation of $P_3$ defined by the formula $x=y$ clearly consists of $o^\Theta(2)$ orbits in $\Theta$; similarly, the $3$-ary subrelation defined by $y=z$ consists of the same number of orbits. Since $P_3$ is the union of these two subrelations, and since the intersection of the two subrelations consists of exactly one orbit (namely, the triples with three equal entries), we obtain $2\cdot o^\Theta(2)-1$ different orbits for tuples in $P_3$.
\end{proof}

Observe that in Proposition~\ref{prop:finiteMinimalClonesAboveEnd}, if $\Theta$ is a reduct of a structure $\Delta$, we can also write $2\cdot o^\Delta(2)-1$ for the arity bound if we wish to have a bound which is independent of $\Theta$, since $\Delta$ has at least as many $2$-orbits as $\Theta$.

\section{The Algorithm}\label{sect:algorithm}

We now present the algorithm proving Theorem~\ref{thm:main:pp}; the proof of the two statements of Theorem~\ref{thm:main:exep} is a subset. 
So we are given formulas $\phi_0,\ldots,\phi_n$ over $\Gamma$ which define relations $R_0,\ldots,R_n$ on the domain $D$ of $\Gamma$. 
Set $\Theta$ to be the reduct $(D;R_1,\ldots,R_n)$ of $\Gamma$, and write $R:=R_0$. 
We will decide whether there is a primitive positive definition of $R$ in $\Theta$.

\subsection{Operationalization}
If there is no such definition, then since $\Theta$ is $\omega$-categorical, by Theorem~\ref{thm:preservation} there is a polymorphism $f$ of $\Theta$ which violates $R$; we call $f$ a \emph{witness}. Our algorithm will now try to build a witness. If it fails to do so, then $R$ is primitive positive definable in $\Theta$; otherwise, it is not.

\subsection{Arity reduction}
Let $k$ be the arity of $R$. By Lemma~\ref{lem:arityReduction}, if there exists a witness, then there exists also a witness of arity equal to the number of those $k$-orbits in $\Theta$ that intersect $R$. This number is not larger than $o^\Theta(k)$, which is not larger than $o^\Gamma(k)$ since $\Aut(\Gamma)\subseteq\Aut(\Theta)$. Set $m:=o^\Gamma(k)$; the algorithm now tries to detect a witness of arity $m$.

\subsection{Ramseyfication}
If $f$ is a witness of arity $m$, then there are $k$-tuples $c_1,\ldots,c_m\in R$ such that $f(c_1,\ldots,c_m)\nin R$. 
By assumption, $\Gamma$ has a first-order definition in an ordered homogeneous structure $\Delta$ that is finitely bounded, Ramsey, and has finite relational signature. 
By Lemma~\ref{lem:canonicalConstantsHigherArity}, $f$ generates over $\Delta$ an $m$-ary function $g$ which is canonical as a function from 
$(\Delta,c_1)\mult\cdots\mult (\Delta,c_m)$ to $\Delta$ and which agrees with $f$ on all $m$-tuples whose $i$-th component is taken from the $k$-tuple $c_i$ for all $1\leq i\leq m$. In particular, $g$ still violates $R$ and preserves $\Theta$, and hence is a witness, too. Our algorithm thus tries to find a witness of this form.

\subsection{Finite representation}
Let $n:=\max(s,n(\Delta),3)$, where $s$ is the maximal size of the finitely many finite forbidden substructures of $\Delta$. Since $n \geq n(\Delta)$, a function from $(\Delta,c_1)\mult\cdots\mult (\Delta,c_m)$ to $\Delta$ is canonical iff it is $n$-canonical. Such functions can thus be represented as functions from $S^{(\Delta,c_1)}_n\mult\cdots\mult S^{(\Delta,c_m)}_n$ to $S^\Delta_n$. Note that the type space $S^{(\Delta,c_i)}_n$ only depends on the type of $c_i$ in $\Delta$. In other words, if we replace the tuple $c_i$ by a tuple $d_i$ of the same type in $\Delta$, we obtain the same possibilities of complete behavior. Since $o^\Delta(k)$ is finite, there are only finitely many choices of types for each $c_i$ -- our algorithm tries all such choices (since $\Delta$ has a finite relational signature, and is homogeneous, those choices can be made effectively). 
For each choice for the types of the $c_i$, and for each function $\sigma$ from $S^{(\Delta,c_1)}_n\mult\cdots\mult S^{(\Delta,c_m)}_n$ to $S^\Delta_n$, the algorithm checks whether $\sigma$ is the behavior of a witness.
\subsection{Verification}
Given $\sigma$, we verify the following.
\begin{itemize}
        \item (Compatibility.) If $\sigma$ is a behavior of a canonical operation, then for all $1\leq k\leq n$ it must also be extendible to a function from $S^{(\Delta,c_1)}_k\mult\cdots\mult S^{(\Delta,c_m)}_k$ to $S^\Delta_k$. This is possible in the following situation: if $s$ is an $n$-type, then it has certain \emph{$k$-subtypes} $t$, i.e., projections of tuples of type $s$ onto $k$ coordinates satisfy $t$. Now products of $k$-subtypes are automatically sent to a $k$-subtype under $\sigma$: if $s_1,\ldots,s_m$ are $n$-types and $I\subseteq \{1,\ldots,n\}$ is a set of size $k$ inducing $k$-subtypes $t_i$ of $s_i$, then $I$ induces a $k$-subtype of $\sigma(s_1,\ldots,s_m)$.
             Our algorithm checks for $n$-types $p_1,q_1,\ldots,p_m,q_m$ and all $I, J\subseteq \{1,\ldots,n\}$ that if $I$ and $J$ induce  identical $k$-subtypes in $p_i$ and $q_i$, respectively, then they induce identical $k$-subtypes in $\sigma(p_1,\ldots,p_m)$ and $\sigma(q_1,\ldots,q_m)$ -- otherwise, $\sigma$ is rejected as a candidate. If on the other hand $\sigma$ satisfies this condition, then it naturally extends to a function from $S^{(\Delta,c_1)}\mult\cdots\mult S^{(\Delta,c_m)}$ to $S^\Delta$ respecting arities, and we can compute the value of this function for every argument. In the following, we write $\sigma$ for this extended function.

        \item (Violation.) Since $R$ has a first-order definition in $\Delta$, and automorphisms of $\Delta$ preserve first-order formulas, it follows that $R$ is a union of orbits, i.e., if $a,b$ are of the same type, then $a\in R$ iff $b\in R$.
                Set $t:=\sigma(\tp^{(\Delta,c_1)}(c_1),\ldots,\tp^{(\Delta,c_m)}(c_m))$. Our algorithm checks that $t$ is not a type in $R$, since we only want to accept $\sigma$ if it is the behavior of an operation which violates $R$ on $c_1,\ldots,c_m$.

        \item (Preservation.) For every relation $R_i$ from $\Theta$, we check that $\sigma$ ``preserves'' $\Theta$ as follows: write $p$ for the arity of $R_i$. For all $p$-types $t_1,\ldots,t_m$ of tuples in $R_i$, we verify that $\sigma(t_1,\ldots,t_m)$ is the type of a tuple in $R_i$; otherwise we reject $\sigma$.

%        \item (Realizability.) Finally, the algorithm computes whether the image of $\sigma$ induces a forbidden substructure, thereby verifying that $\sigma$ really is the behavior of an operation. Let $\F$ be the finite set of finite structures such that for all finite structures $A$ in the signature of $\Delta$ we have that $A$ is in the age of $\Delta$ iff no structure from $\mathcal F$ embeds into $A$. Now for all $F$ in $\F$, we check whether the type of $F$ (enumerated as a tuple) is in the range of $\sigma$; if so, we reject $\sigma$.
\end{itemize}

We now argue that the algorithm finds a $\sigma$ satisfying our three conditions if and only if there is an $m$-ary polymorphism of $\Theta$ that violates $R$. It is clear that the type function of a witness will satisfy all the conditions, so one direction is straightforward.
For the opposite direction, suppose that $\sigma$ is accepted by our algorithm. We build a canonical operation from $(\Delta,c_1)\mult\cdots\mult (\Delta,c_m)$ to $\Delta$ in three steps. Let $\tau$ be the signature of $\Delta$.

We first construct an infinite structure $\Pi$ with domain $D^m$
and signature $\tau \cup \{\sim\}$, where $\sim$ is a new binary relation symbol,
as follows. This relation is for the proper treatment of equality of function values
when realizing the behavior $\sigma$. 
For all $(a_1,b_1),\dots,(a_m,b_m) \in D^2$ with types $t_1,\dots,t_m$ 
in $(\Delta,c_1),\dots,(\Delta,c_m)$, respectively, if the 2-type $\sigma(t_1,\dots,t_m)$ contains $x_1 = x_2$ then we set $(a_1,\dots,a_m) \sim (b_1,\dots,b_m)$. 
Note that since $n \geq 3$ and because of the compatibility constraints and transitivity of equality, $\sim$ then denotes an equivalence relation on $D^m$. 
The other relations of $\Pi$ are defined as follows. Let $R$ be a $k$-ary relation from
$\tau$. 
%the signature of $\Delta$. 
We add the $k$-tuple $((a^1_1,\dots,a^1_m),\dots,(a^k_1,\dots,a^k_m))$ to the relation $R$ of $\Pi$ if and only if $R(x_1,\dots,x_k)$ is contained in $\sigma(t_1,\dots,t_m)$,
where $t_i$ is the type of the tuple $(a^1_i,\dots,a^k_i) \in D^k$ in $(\Delta,c_i)$.
 Since $n \geq n(\Delta) \geq k$, 
this is well-defined by the compatibility item of our algorithm.

The quotient structure $\Pi /_{\sim}$ is defined to be the $\tau$-structure 
whose domain is the set $D/_{\sim}$ of all equivalence classes of $\sim$, and
where $R(E_1,\dots,E_p)$ holds for a $p$-ary $R \in \tau$ and $E_1,\dots,E_p \in D/_{\sim}$ if and only if there are $b_1 \in E_1,\dots,b_p \in E_p$ such that
$R(b_1,\dots,b_p)$ holds in $\Pi$. 
The final step is to show that there exists an embedding $f$ of $\Pi/_{\sim}$ into $\Delta$.
By $\omega$-categoricity of $\Delta$ and a standard compactness argument (see, e.g., Lemma 2 in~\cite{BodDalJournal}),
it suffices to show every finite substructure $\Omega$ of $\Pi/_{\sim}$ embeds into $\Delta$. 
This follows from the fact that none of the forbidden substructures embeds into $\Omega$,
since $n \geq s$, where $s$ is the size of the largest obstruction.

Finally, observe that the mapping $g$ from $D^m$ to $D$ that maps every $u$ in $D^m$ to
$f({u}/_{\sim})$ (where ${u}/_{\sim}$ denotes the $\sim$-equivalence class of $u$ in $\Pi$)
is a polymorphism of $\Theta$ by the preservation item of the algorithm,
and that $g$ violates $R$ by the violation item of the algorithm. 

\section{Decidability of Polymorphism Conditions}
In all known cases of structures $\Gamma$ with a finite relational signature and a 
first-order definition in a finitely
bounded ordered homogeneous Ramsey structure, $\Csp(\Gamma)$ is tractable
if and only if there exists a $4$-ary polymorphism $f$ of $\Gamma$
and an automorphism $\alpha$ of $\Gamma$ such that for all elements $x,y,z$
of $\Gamma$
\begin{align*} f(x,y,z,z) & = \alpha(f(y,z,x,y)) & (*)
\end{align*}
One can show that condition $(*)$ describes indeed the frontier between tractability
and NP-hardness for reducts of $({\mathbb Q};<)$ and the random graph.
It has also been conjectured to be the tractability frontier of $\Csp(\Gamma)$ for structures $\Gamma$ with a finite domain~\cite{JBK,Siggers}.
%the countably infinite homogeneous universal graph. 

When $\Gamma$ is given by defining quantifier-free formulas over $\Delta$, and $\Delta$
is given by its forbidden induced substructures, then 
 the existence of $f,\alpha$ satisfying condition $(*)$ can be tested by an algorithm, 
 by the techniques developed here.
A $4$-ary operation $f$ satisfies this condition 
%only depends on the type function $\sigma$ of $f$:
if and only if the type function $\sigma$ of $f$ satisfies $\sigma(t_1,t_2,t_3,t_3)=\sigma(t_2,t_3,t_1,t_2)$ for all $n$-types $t_1,t_2,t_3$ of $\Delta$, 
where $n \geq \max(n(\Delta),3,s)$ and $s$ is the maximal obstruction size of 
$\Delta$.

\section{Discussion and Open Problems}
We presented an algorithm that decides primitive positive 
definability in finite relational signature reducts $\Gamma$ of structures that are ordered, Ramsey, homogeneous, finitely bounded, and with finite relational signature.  
All of those structures $\Gamma$ are $\omega$-categorical.
While the condition for $\Gamma$ might appear rather restrictive at first sight, it is actually are quite general: we want
to point out that we do not require that $\Gamma$ is Ramsey, we only require that 
$\Gamma$ is definable in a Ramsey structure. We do not know 
of a single homogeneous structure $\Gamma$ with finite relational signature which 
is \emph{not} the reduct of an ordered homogeneous Ramsey structure with finite relational signature. 

\begin{problem}
Does every structure which is homogeneous in a finite relational signature have
a homogeneous expansion by finitely many relations such that the
resulting structure is Ramsey?
\end{problem}

A variant of this problem is the following.

\begin{problem}
Does every $\omega$-categorical structure have an
$\omega$-categorical expansion which is Ramsey?
\end{problem}

%is not definable in a homogeneous finite signature structure that
%is additionally ordered and Ramsey.

%However, note that reducts of homogeneous 
%structures with finite relational signature have at most $2^{P(n)}$ many orbits of $n$-element subsets~\cite{Oligo}, for some polynomial $P$.

Note that our method is non-constructive: the algorithm does not produce a primitive
positive definition in case that there is one. It is an interesting open problem to come up with bounds on the number of
existential variables that suffice for a primitive positive definition of $R$ in $\Theta$. For many structures $\Gamma$ of practical interest, such as $({\mathbb Q};<)$ or
the random graph, our algorithm can certainly be tuned so that $\exprpp(\Gamma)$ becomes feasible for reasonable input size; in particular, the
gigantic Ramsey constants involved in the proofs of our results do not
affect the running time of our procedure.

Another important open problem is whether the method can be extended to show
decidability of our computational problem for \emph{first-order} definability instead of primitive positive, existential positive, and existential definability; we denote this
computational problem by $\exprpp(\Gamma)$. 
By the theorem of Ryll-Nardzewski, first-order definability
is characterized by preservation under automorphisms, i.e., surjective self-embeddings.
But the requirement of surjectivity is difficult to deal with in our approach.

\begin{problem}
Let $\Delta$ be a structure which is ordered, homogeneous, Ramsey, finitely bounded, and has a finite relational signature, and let $\Gamma$ be a reduct of $\Delta$ with finite relational signature. 
Is the problem $\exprfo(\Gamma)$ decidable?
\end{problem}

\bibliographystyle{plain}
\bibliography{local}

\end{document}